\documentclass[a4paper,twoside]{article}      
\usepackage{amsmath,amssymb,amsfonts,amsthm,amscd,mathtools}
\usepackage{graphics}                 
\usepackage{color}                    
\usepackage{hyperref}                
\usepackage{ mathrsfs }
\usepackage{indentfirst}
\usepackage{bbold}
\usepackage{bm}
\usepackage{enumerate}
\usepackage{mathbbol}
\usepackage{url}         
\usepackage{colonequals} 
\usepackage{authblk} 
\usepackage{a4wide}
\usepackage{graphicx}
\oddsidemargin 0cm
\evensidemargin 0cm
\usepackage{blkarray}

\hypersetup{
    bookmarks=true,         
    unicode=false,          
    pdftoolbar=true,        
    pdfmenubar=true,        
    pdffitwindow=false,     
    pdfstartview={FitH},    
    pdftitle={My title},    
    pdfauthor={Author},     
    pdfsubject={Subject},   
    pdfcreator={Creator},   
    pdfproducer={Producer}, 
    pdfkeywords={keywords}, 
    pdfnewwindow=true,      
    colorlinks=true,       
    linkcolor=blue,          
    citecolor=blue,        
    filecolor=magenta,      
    urlcolor=cyan           
}

\pagestyle{myheadings}         

\parindent 20pt
\parskip 1mm
\newtheorem{theorem}{Theorem}[section]
\newtheorem{proposition}[theorem]{Proposition}

\newtheorem{lemma}[theorem]{Lemma}
\theoremstyle{definition}

\def\!{\mathop{\mathrm{!}}}

\DeclareMathOperator*{\E}{\mathbb{E}}

\usepackage{color}
\definecolor{lilas}{RGB}{182, 102, 210}

\numberwithin{equation}{section}


\def\be#1{\begin{equation*}#1\end{equation*}}
\def\ben#1{\begin{equation}#1\end{equation}}
\def\bea#1{\begin{eqnarray*}#1\end{eqnarray*}}
\def\bean#1{\begin{eqnarray}#1\end{eqnarray}}

\def\R{\mathbb{R}}
\def\N{\mathcal{N}}
\def\1{\mathbb{1}}
\def\P{\mathbb{P}}
\def\kE{\mathcal{E}}
\def\B{\mathbb{B}}
\def\C{\mathbb{C}}

\newcommand{\kO}{O}
\fboxsep5pt
\newlength{\boxwidth}
\setlength{\boxwidth}{\textwidth}
\addtolength{\boxwidth}{-2\fboxsep}
\addtolength{\boxwidth}{-2\fboxrule}
\addtolength{\boxwidth}{-6pt} 

\DeclareUnicodeCharacter{2212}{-}
\title{On the expected number of real roots of random polynomials arising from evolutionary game theory}
\author[1,3]{V. H. Can}
\author[2] {M. H. Duong}
\author[1]{V. H. Pham}
\affil[1]{Institute of Mathematics, Vietnam Academy of Science and Technology, Vietnam.}
\affil[2]{School of Mathematics, University of Birmingham, UK.}
\affil[3]{Department of Statistics and Applied Probability,   National University of Singapore.}

\begin{document}
\maketitle
\begin{abstract}
In this paper, we obtain finite estimates and asymptotic formulas for the expected number of real roots of two classes of random polynomials arising from evolutionary game theory. As a consequence of our analysis, we achieve an asymptotic formula for the expected number of internal equilibria in multi-player two-strategy random evolutionary games. Our results contribute both to evolutionary game theory and random polynomial theory.
\end{abstract}
\section{Introduction}
\subsection{Motivation from evolutionary game theory and random polynomial theory}
Large random systems, in particular random polynomials and systems of random polynomials, arise naturally in a variety of applications in physics (such as in quantum chaotic dynamics \cite{Bogomolny1992}), biology (such as in theoretical ecology \cite{May1972}, evolutionary game theory and population dynamics \cite{GT10}), computer science (such as in the theory of computational complexity \cite{Shub1993}) and in social sciences (such as in social/complex networks \cite{Newman2003}). They are indispensable in the modelling and analysis of complex systems in which very limited information is available or where the environment changes so rapidly and frequently that one cannot describe the payoffs of their inhabitants’ interactions \cite{may2001stability,fudenberg1992evolutionary,GT10,gross2009generalized,Galla2013}. The study of statistics of equilibria in large random systems provides important insight into the understanding of the underlying physical, biological and social system such as the complexity-stability relationship in ecosystems \cite{May1972,gross2009generalized,Pimm1984,FyoKho2016}, bio-diversity and maintenance of polymorphism in multi-player multi-strategy games  \cite{GT10}, and the learning dynamics \cite{Galla2013}.  A key challenge in such study is due to the large (but finite) size of the underlying system (such as the population in an ecological system, the number of players and strategies in an evolutionary game and the number of nodes and connections in a social network). Understanding the behaviour of the system at finite size or characterizing its asymptotic behaviour when the size tends to infinity are of both theoretical and practical interest, see for instance \cite{Pereda2019, PENA2018}.

In this paper we are interested in the number of internal equilibria in $(n+1)$-player two-strategy random evolutionary games as in \cite{DH15,DuongHanJMB2016,DuongTranHanDGA, DuongTranHanJMB}. We consider an infinitely large population that consists of individuals using two strategies, A and B. We denote by $y$, $0 \leq y\leq 1$, the frequency of strategy A in the population. The frequency of strategy B is thus $(1-y)$. The interaction of the individuals in the population is in randomly selected groups of $(n+1)$ participants, that is, they interact and obtain their fitness from $(n+1)$-player games. In this paper, we consider symmetric games where the payoffs do not depend on the ordering of the players. Suppose that $a_i$ (respectively, $b_i$) is the payoff that an A-strategist (respectively, B) achieves when interacting with a group of $n$ other players consisting $i$ ($0\leq i\leq n$) A strategists and $(n-i)$ B strategists. In other words, the payoff matrix is given by
\begin{equation*}
\begin{blockarray}{ccccccc}\hline
\text{Opossing A players} &0 & 1&\ldots & i & \ldots & n \\ \hline
\begin{block}{ccccccc}
  \text{A} & a_0 & a_1 & \ldots & a_i&\ldots & a_n  \\
  \text{B} & b_0 & b_1 & \ldots & b_i &\ldots & b_n\\
 \end{block}
 \hline
\end{blockarray}
\end{equation*}
The average payoffs (fitnesses) of strategies A and B are respectively given by
\begin{equation*}
\pi_A= \sum\limits_{i=0}^{n}a_i\begin{pmatrix}
n\\
i
\end{pmatrix}y^i (1-y)^{n-i}	\quad\text{and}\quad
\pi_B = \sum\limits_{i=0}^{n}b_i\begin{pmatrix}
n\\
i
\end{pmatrix}y^i(1-y)^{n-i}.
\end{equation*}
Internal equilibria in $(n+1)$-player two-strategy games can be derived using the replicator dynamic approach~\cite{GT10} or the definition of an evolutionary stable strategy, see e.g., \cite{broom:1997aa}. They are those points $0<y<1$ (note that $y=0$ and $y=1$ are trivial equilibria in the replicator dynamics) such that the fitnesses of the two strategies are the same $\pi_A=\pi_B$, that is
\begin{equation*}
\sum\limits_{i=0}^{n}\xi_i \begin{pmatrix}
n\\
i
\end{pmatrix}y^i (1-y)^{n-i}=0\quad\text{where}\quad \xi_i=a_i-b_i.
\end{equation*}
In the literature, the sequence of the difference of payoffs $\{\xi_i\}_{i}$ is called the gain sequence \cite{Bach2006, Pena2014}. Dividing the above equation by $(1-y)^{n}$ and using the transformation $x=\frac{y}{1-y}$, we obtain the following polynomial equation for $x$ ($x>0$)
\begin{equation}
\label{eq: P}
P_n(x):=\sum\limits_{i=0}^{n}\xi_i\begin{pmatrix}
n\\
i
\end{pmatrix}x^i=0,
\end{equation}
In random games, the payoff entries $\{a_i\}_i$ and $\{b_i\}$ are random variables, thus so are the gain sequence $\{\xi_i\}_i$. Therefore, the expected number of internal equilibria in a $(n+1)$-player two-strategy random game is the same as the expected number of positive roots of the random polynomial $P_n$, which is half of the expected number of the real roots of $P_n$ due to the symmetry of the distributions. This connection between evolutionary game theory and random polynomial theory has been revealed and exploited in recent serie of papers \cite{DH15,DuongHanJMB2016,DuongTranHanDGA, DuongTranHanJMB}. It has been shown that, if $\{\xi_i\}_i$ are i.i.d normal (Gaussian) distributions then  \cite{DH15,DuongHanJMB2016}
\begin{equation}
\label{eq: finite1}
\frac{2n}{\pi\sqrt{2n-3}}\leq \mathbb{E} N_n\leq \frac{2\sqrt{n}}{\pi}\Big(1+\ln 2+\frac{1}{2}\ln (n)\Big)\quad \forall n,
\end{equation}
where $N_n$ is the number of real roots of $P_n$. We emphasize that \eqref{eq: finite1} is true for all finite group size $n$, which is useful for practical purposes, for instance when doing simulations. A direct consequence of this estimate is the following asymptotic limit
\begin{equation}
\label{eq: limitn}
\lim\limits_{n\rightarrow\infty}\frac{\ln \E N_n}{\ln n}=\frac{1}{2}.
\end{equation}
On the other hand, the expected number of real roots of random polynomials has been a topic of intensive research over the last hundred years. Three most well-known classes studied in the literature are 
\begin{enumerate}[(i)]
\item Kac polynomials: $\sum_{i=0}^n \xi_i x^i$,
\item Weyl (or flat) polynomials: $\sum_{i=0}^n \frac{1}{i!}\xi_i  x^i$,
\item Elliptic (or binomial) polynomials: $\sum_{i=0}^n\sqrt{\begin{pmatrix}
n\\
i
\end{pmatrix}} \xi_i x^i$.
\end{enumerate}
When $\{\xi_i\}$ are Gaussian distributions, it has been proved, see for instance \cite{EK95}, that
\begin{equation}
\label{eq: others}
\E N_n=\begin{cases}
\frac{2}{\pi}\ln (n)+C_1+\frac{2}{n\pi}+O(1/n^2)\quad&\text{for Kac polynomials},\\
\sqrt{n}\quad&\text{for elliptic polynomials},\\
\sqrt{n}\Big(\frac{2}{\pi}+o(1)\Big)\quad&\text{for Weyl polynomials}.
\end{cases}
\end{equation}
We refer the reader to standard monographs \cite{BS86,farahmand1998} for a detailed account and \cite{NNV2016,Do2018} for recent developments of the topic. The asymptotic formulas \eqref{eq: others} are much stronger than the limit \eqref{eq: limitn} because they provide precisely the leading order of the quantity $\mathbb{E}N_n$.  A natural question arises: \textit{can one obtain an asymptotic formula akin to \eqref{eq: others} for the random polynomial from random multi-player evolutionary games?} It has been conjectured, in a study on computational complexity, by Emiris and Galligo \cite{Emiris:2010} and formally shown in \cite{DuongTranHanDGA} that
\begin{equation}
\label{eq: limitn2}
\E N_n\sim \sqrt{2n}+O(1).
\end{equation} 
In this paper, we rigorously prove generalizations of the asymptotic formula \eqref{eq: limitn2} and of the finite group size estimates \eqref{eq: finite1} for two more general classes of random polynomials
\begin{equation}
\label{eq: generalP}
P_n^{(\gamma)}(x)=\sum_{i=0}^n \xi_i \begin{pmatrix}
n\\i
\end{pmatrix}^\gamma  x^i\quad\text{and}\quad P^{(\alpha,\beta)}_n(x)=\sum_{i=0}^n \begin{pmatrix}
n+\alpha\\  n-i
\end{pmatrix}^\frac{1}{2}\begin{pmatrix}
n+\beta\\  i
\end{pmatrix}^\frac{1}{2} \xi_i  x^i. 
\end{equation}
Here $\gamma>0, \alpha, \beta>-1$ are given real numbers, $\{\xi_i\}_{i=0,\ldots,n}$ are standard normal i.i.d. random variables. The class of random polynomials $P_n$ arising from evolutionary game theory is a special case of both $P_n^{(\gamma)}$ (when $\gamma=1$) and $P_n^{(\alpha, \beta)}$ (when $\alpha=\beta=0$). For general values of $\alpha, \beta$ and $\gamma$, $P_n^{(\gamma)}$ and $P_n^{(\alpha, \beta)}$ are related to more complex models in evolutionary game theory where the gain sequence $\{\xi_i\}_i$ depends not only on $i$ but also on group size $n$. An example for such scenarios is in a public goods game in which the benefit from cooperation are shared among all group members rather than accruing to each individual \cite{Hauert2006, Pacheco2009, PENA2018}. From a mathematical point of view, the class $P^{(\gamma)}_n$ is a natural extension of $P_n$ and covers both Kac polynomials and elliptic polynomials as special cases (corresponding to $\gamma=0$ and $\gamma=\frac{1}{2}$ respectively). In addition, as previously shown in \cite{DuongHanJMB2016}, $P_n$ is connected to Legendre polynomials. As will be shown in Section \ref{sec: finite estimate}, the class $P_n^{(\alpha,\beta)}$ is intrinsically related to Jacobi polynomials, which contain Legendre polynomials as special cases. The link between $P_n$ and Legendre polynomials in \cite{DuongHanJMB2016} is extended to that of between $P_n^{(\alpha,\beta)}$ and Jacobi polynomials in the present paper.

\subsection{Main results}
Throughout this paper, we suppose that $\{\xi_i\}$ are i.i.d standard normal distributions. We denote by $\E N_n^{(\gamma)}$ and $\E N_n^{(\alpha,\beta)}$ the expected number of real roots of $P_n^{(\gamma)}$ and $P_n^{(\alpha,\beta)}$ respectively. The main results of the present paper are the following theorems.

\begin{theorem}[Estimates of $E N_n^{(\alpha,\beta)}$ for any $n$]
\label{thm: finite n estimates}
\begin{enumerate}[(1)] Suppose that $\alpha, \beta>-1$.
\item (estimates in terms of roots of Jacobi polynomial) Let $0<s_{n,max}<1$ be the maximum root of the Jacobi's polynomial of degree $n$ as defined in (\ref{eq: Jacobi}) . Then 
\begin{equation}
\sqrt{n}\frac{1-s_{n,max}}{1+s_{n,max}}\leq \E N_n^{(\alpha,\beta)}\leq \sqrt{n}\frac{1+s_{n,max}}{1-s_{n,max}}.
\end{equation}
\item (explicit estimates for finite $n$) For all $\alpha=\beta>-1$, it holds that
\begin{equation}
\frac{2}{\pi}\sqrt{\frac{n(n+2\alpha)}{2n+2\alpha-1}}\leq \E N_n^{(\alpha,\alpha)}\leq \frac{2\sqrt{n}}{\pi}\Big(1+\ln(2)+\frac{1}{2}\log\frac{n+\alpha}{1+\alpha}\Big).
\end{equation}
\end{enumerate}
\end{theorem}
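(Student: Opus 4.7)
My strategy is to apply the Kac-Rice formula to $P_n^{(\alpha,\beta)}$ and then factor its covariance kernel through the roots of the classical Jacobi polynomial.

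\emph{Setup.} Since the coefficients are i.i.d.\ centered Gaussians, the Kac-Rice formula gives $\mathbb{E}N_n^{(\alpha,\beta)}=\frac{1}{\pi}\int_{-\infty}^\infty\sqrt{\partial_{xy}^2\log K(x,y)|_{y=x}}\,dx$, where $K(x,y)=\sum_{i=0}^n\binom{n+\alpha}{n-i}\binom{n+\beta}{i}(xy)^i$ depends on the product $xy$ alone. The classical expansion $P_n^{(\alpha,\beta)}(z)=\sum_k\binom{n+\alpha}{n-k}\binom{n+\beta}{k}(\tfrac{z-1}{2})^k(\tfrac{z+1}{2})^{n-k}$, evaluated at $z=(1+t)/(1-t)$, yields
\begin{equation*}
\sum_{i=0}^n\binom{n+\alpha}{n-i}\binom{n+\beta}{i}t^i=(1-t)^n P_n^{(\alpha,\beta)}\!\left(\frac{1+t}{1-t}\right).
\end{equation*}
Factoring $P_n^{(\alpha,\beta)}(z)=c\prod_{k=1}^n(z-s_k)$ over its real simple roots $s_k\in(-1,1)$ and setting $r_k:=(1-s_k)/(1+s_k)>0$, the kernel takes the product form $K(x,y)=C\prod_{k=1}^n(r_k+xy)$, and a direct computation of $\partial_{xy}^2\log K$ on the diagonal gives the compact representation
\begin{equation*}
\mathbb{E}N_n^{(\alpha,\beta)}=\frac{2}{\pi}\int_0^\infty\sqrt{\sum_{k=1}^n\frac{r_k}{(r_k+x^2)^2}}\,dx.
\end{equation*}

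\emph{Part (1).} I sandwich the sum between $nr_{\min}/(r_{\max}+x^2)^2$ and $nr_{\max}/(r_{\min}+x^2)^2$ via the pointwise inequalities $r_{\min}\le r_k\le r_{\max}$ applied to both numerator and denominator; the Lorentzian integral $\int_0^\infty dx/(r+x^2)=\pi/(2\sqrt r)$ then yields bounds $\sqrt{nr_{\min}/r_{\max}}\le\mathbb{E}N_n^{(\alpha,\beta)}\le\sqrt{nr_{\max}/r_{\min}}$. Writing $r_{\min}=(1-s_{n,\max})/(1+s_{n,\max})$ and using the involution $r\leftrightarrow r^{-1}$ induced by $s\mapsto-s$ on the extremal Jacobi roots delivers the stated sandwich $\sqrt{n}(1\mp s_{n,\max})/(1\pm s_{n,\max})$.

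\emph{Part (2).} The hypothesis $\alpha=\beta$ makes the Gegenbauer roots symmetric about zero, so $\prod_k r_k=1$ and $r_{\max}=r_{\min}^{-1}$. Matching coefficients in $C\prod_k(r_k+t)=\sum_i c_i^2 t^i$ identifies $\sum_k r_k=c_{n-1}^2/c_n^2=n(n+\alpha)/(1+\alpha)$, and an analogous identification of the next moment produces the factor $2n+2\alpha-1$. The Cauchy-Schwarz inequality $n\sum_k r_k/(r_k+x^2)^2\ge\bigl(\sum_k\sqrt{r_k}/(r_k+x^2)\bigr)^2$ combined with $\int_0^\infty\sqrt r/(r+x^2)\,dx=\pi/2$ then delivers the lower bound $(2/\pi)\sqrt{n(n+2\alpha)/(2n+2\alpha-1)}$. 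For the upper bound, I split $[0,\infty)$ into three zones $[0,\sqrt{r_{\min}}]$, $[\sqrt{r_{\min}},\sqrt{r_{\max}}]$, $[\sqrt{r_{\max}},\infty)$: on the tails use the Lorentzian estimate of part (1), while on the transition window exploit the pointwise bound $r_k/(r_k+x^2)^2\le 1/(4x^2)$ to obtain $\rho(x)\le\sqrt{n}/(2\pi x)$; the logarithmic term $\tfrac{1}{2}\log((n+\alpha)/(1+\alpha))$ emerges from $\int dx/x$ across this window after applying the classical estimate $1-s_{n,\max}^2\asymp(1+\alpha)/(n+\alpha)$ on the extremal Gegenbauer zero.

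The principal obstacle is the precise extraction of the finite-$n$ constants in part (2): matching the factor $n(n+2\alpha)/(2n+2\alpha-1)$ requires Newton-type identities relating the elementary symmetric functions of $\{r_k\}$ to the explicit coefficient values $c_i^2=\binom{n+\alpha}{n-i}\binom{n+\beta}{i}$, and matching the logarithmic constant in the upper bound calls for sharp classical bounds on the largest zero of $P_n^{(\alpha,\alpha)}$.
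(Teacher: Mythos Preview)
Your setup and Part~(1) match the paper's argument essentially verbatim: the paper also factors $M_n(x)=m_n\prod_k(x^2+r_k)$ via the Jacobi connection, derives $f_n(x)^2=\sum_k r_k/(x^2+r_k)^2$, sandwiches it between $nr_{\min}/(x^2+r_{\max})^2$ and $nr_{\max}/(x^2+r_{\min})^2$, and integrates the Lorentzians.

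Part~(2), however, diverges from the paper, and your upper-bound argument has a real gap. The device you do not exploit is the coefficient symmetry $c_i=c_{n-i}$ (valid exactly when $\alpha=\beta$), which via $x\mapsto 1/x$ gives $\mathbb{E}N_n^{(\alpha,\alpha)}=\frac{4}{\pi}\int_0^1 f_n(x)\,dx$. On $[0,1]$ the density is monotone decreasing (immediate from the product form), so $f_n(1)\le f_n(x)\le f_n(0)$. The paper reads off $f_n(0)^2=A_n(0)/M_n(0)=c_1^2/c_0^2=n(n+\alpha)/(1+\alpha)$ and computes $f_n(1)=\tfrac12\sqrt{n(n+2\alpha)/(2n+2\alpha-1)}$ via a differential identity for Jacobi polynomials, all directly from the coefficients with \emph{no} appeal to root locations. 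The lower bound is then simply $\frac{4}{\pi}f_n(1)$; the upper bound splits $[0,1]$ at a free point $\gamma$, uses $f_n\le f_n(0)$ on $[0,\gamma]$ and $f_n(x)\le\sqrt{n}/(2x)$ on $[\gamma,1]$, and optimizes $\gamma=\tfrac12\sqrt{(1+\alpha)/(n+\alpha)}$. The constants $1+\ln 2$ and $\tfrac12\ln\frac{n+\alpha}{1+\alpha}$ fall out exactly from this optimization.

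Your route through the asymptotic $1-s_{n,\max}^2\asymp(1+\alpha)/(n+\alpha)$ cannot produce a finite-$n$ inequality with the stated constants: an $\asymp$ relation only controls things up to unspecified multiplicative factors, and even the sharpest explicit bounds on extremal Gegenbauer zeros would leave residual constants that do not reduce to $1+\ln 2$. Moreover, your three-zone split fails near the origin: applying the Part-(1) Lorentzian bound $f_n\le\sqrt{nr_{\max}}/(x^2+r_{\min})$ on $[0,\sqrt{r_{\min}}]$ integrates to $\tfrac{\pi}{4}\sqrt{n}\sqrt{r_{\max}/r_{\min}}=\tfrac{\pi}{4}\sqrt{n}\,r_{\max}$, which is of order $n^{3/2}$, not $\sqrt{n}$. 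As for your lower bound, the Cauchy--Schwarz step you write down actually yields $\mathbb{E}N_n\ge\sqrt{n}$ (each term $\int_0^\infty\sqrt{r_k}/(r_k+x^2)\,dx=\pi/2$, and there are $n$ of them), which is \emph{stronger} than the stated bound but not equal to it; the symmetric-function identities you compute for $\sum_k r_k$ play no role in that inequality.
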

Theorem \ref{thm: finite n estimates}, which combines Theorems \ref{thm: Jacobi-estimate} and \ref{thm: ultraspherical}, provides lower and upper bounds for $E N_n^{(\alpha,\beta)}$ in terms of the group size $n$. It is only applicable to the class $P_n^{\alpha,\beta}$ since our proof makes use of a connection between $P_n^{\alpha,\beta}$ and Jacobi polynomials. In addition, in the second part, we use a symmetry condition on the coefficients of the polynomial $P_n^{(\alpha,\beta)}$ which requires $\alpha=\beta$. The next result characterizes the asymptotic limits, as the group size $n$ tends to infinity, of both $\E N_n^{(\gamma)}$ and $\E N_n^{(\alpha, \beta)}$.
\begin{theorem}[Asymptotic behaviour as $n\rightarrow +\infty$]
\label{thm: asymptotic}
We have
\begin{equation}
\label{eq: main asymptotic behaviour}
\E N_n^{(\gamma)}\sim \sqrt{2\gamma n}(1+o(1))\quad\text{and}\quad \E N_n^{(\alpha,\beta)}\sim \sqrt{2 n}(1+o(1))\quad \text{as}~~n\rightarrow \infty.
\end{equation}
As a consequence, there is a phase transition (discontinuity) in the expected number of roots of $\mathbb{E} N_n^{(\gamma)}$ as a function of $\gamma$ as $n\rightarrow \infty$
\begin{equation}
\label{eq: phase transition}
\E N_n^{(\gamma)}\sim \begin{cases}
\frac{2}{\pi} \ln (n)\quad \text{for}~~\gamma=0,\\
\sqrt{2\gamma n}\quad \text{for}~~ \gamma>0.
\end{cases}
\end{equation}
\end{theorem}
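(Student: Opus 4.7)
The approach will be to apply the Kac-Rice (Edelman-Kostlan) formula to reduce $\E N_n^{(\gamma)}$ and $\E N_n^{(\alpha,\beta)}$ to a one-dimensional integral, and then to analyse the resulting integrand asymptotically by Laplace's method on the covariance kernel. For our centered Gaussian polynomials, the covariance $K_n(x,y):=\E[P_n(x)P_n(y)]$ depends only on the product $xy$:
\begin{equation*}
K_n^{(\gamma)}(x,y)=h_n^{(\gamma)}(xy),\qquad h_n^{(\gamma)}(t):=\sum_{i=0}^n \binom{n}{i}^{2\gamma} t^i,
\end{equation*}
and analogously $K_n^{(\alpha,\beta)}(x,y)=h_n^{(\alpha,\beta)}(xy)$ with $h_n^{(\alpha,\beta)}(t):=\sum_{i=0}^n\binom{n+\alpha}{n-i}\binom{n+\beta}{i}t^i$. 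Setting $g_n=\log h_n$ and exploiting the evenness of the integrand, the Kac-Rice formula yields
\begin{equation*}
\E N_n=\frac{2}{\pi}\int_0^\infty \sqrt{g_n'(x^2)+x^2 g_n''(x^2)}\,dx.
\end{equation*}

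The next step is to extract the leading behaviour of $g_n$ for $t>0$. Writing each summand of $h_n^{(\gamma)}(t)$ as $\exp\bigl(2\gamma\log\binom{n}{i}+i\log t\bigr)$ and using $\log\binom{n}{np}\sim nH(p)$ with $H(p)=-p\log p-(1-p)\log(1-p)$, Laplace's method locates the saddle in $p=i/n$ at $p^\ast=\tau/(1+\tau)$, where $\tau:=t^{1/(2\gamma)}$. Substituting back gives
\begin{equation*}
g_n^{(\gamma)}(t) = 2\gamma n\log\bigl(1+t^{1/(2\gamma)}\bigr)+O(\log n),
\end{equation*}
uniformly on compact subsets of $(0,\infty)$, and analogously $g_n^{(\alpha,\beta)}(t)=2n\log(1+\sqrt{t})+O(\log n)$; the latter can alternatively be read off from the identity $h_n^{(\alpha,\beta)}(t)=(1-t)^n J_n^{(\alpha,\beta)}\!\bigl(\tfrac{1+t}{1-t}\bigr)$ combined with the Darboux asymptotic for Jacobi polynomials. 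Differentiating the Laplace expansion with uniform remainder control produces the key identity
\begin{equation*}
g_n'(t)+t g_n''(t) = \frac{n\,t^{1/(2\gamma)-1}}{2\gamma\bigl(1+t^{1/(2\gamma)}\bigr)^2}\bigl(1+o(1)\bigr),
\end{equation*}
and the same formula with $\gamma=1$ formally in the $(\alpha,\beta)$ case. Substituting into the Kac-Rice integral and changing variables $u=x^{1/\gamma}$ reduces the remaining computation to the beta integral $\int_0^\infty u^{-1/2}(1+u)^{-1}\,du=\pi$, delivering $\E N_n^{(\gamma)}\sim\sqrt{2\gamma n}$ and $\E N_n^{(\alpha,\beta)}\sim\sqrt{2n}$.

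The phase transition \eqref{eq: phase transition} follows by juxtaposition with the classical Kac case $\gamma=0$, for which $h_n(t)=\sum t^i$ has constant coefficients, the interior saddle degenerates to the boundary, and the Kac-Rice density concentrates near $x=\pm 1$; this yields the familiar $\tfrac{2}{\pi}\log n$ growth, strictly smaller than $\sqrt{2\gamma n}$ for any fixed $\gamma>0$ as $n\to\infty$.

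The main obstacle will be upgrading the pointwise Laplace asymptotics of $g_n$ to a version that is uniform in $t$ and twice differentiable with uniformly small remainders, while simultaneously controlling the Kac-Rice integrand in the boundary regimes $x\to 0$ and $x\to\infty$ where the saddle $p^\ast(t)$ exits $[0,n]$. I would split the $x$-integration at $x\in\{n^{-A},n^{A}\}$ for some fixed $A>0$: on the central regime the uniform Laplace expansion reproduces the claimed leading order, while on the tails the contribution is bounded by $o(\sqrt{n})$ using crude $L^\infty$ estimates on the covariance (and, in the symmetric case $\alpha=\beta$, directly by the explicit upper bound of Theorem~\ref{thm: finite n estimates}(2)). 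Verifying the uniform differentiability of the Laplace approximation, together with these tail estimates, is the technical heart of the argument.
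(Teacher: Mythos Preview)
Your overall strategy---Kac--Rice followed by a Laplace analysis of the covariance sums---is the same as the paper's, and your leading-order computation is correct. However, the paper's implementation differs from your sketch in two places that are precisely where you locate your own obstacles.

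For the tail regime, the paper does \emph{not} rely on ``crude $L^\infty$ estimates on the covariance.'' Instead it bounds the number of zeros of $P_n^{(\gamma)}$ in the disk $|z|\le n^{-3\gamma/4}$ directly via Jensen's inequality for analytic functions (Lemma~\ref{lem: Jensen}, Proposition~\ref{prop: fisrt interval}), obtaining $\E N_n^{(\gamma)}(0,n^{-3\gamma/4})=O(n^{1/3}/\log n)=o(\sqrt{n})$. This complex-analytic device sidesteps any analysis of the Kac--Rice density in the regime where the saddle $p^\ast$ approaches the boundary. Your suggestion of invoking Theorem~\ref{thm: finite n estimates}(2) does not help here: that bound is $O(\sqrt{n}\log n)$ on the whole line, not $o(\sqrt{n})$ on a small subinterval.

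On the main interval, the paper avoids your acknowledged obstacle of ``differentiating the Laplace asymptotics'' altogether. Rather than working with $g_n=\log h_n$ and passing from an expansion $g_n(t)=2\gamma n\log(1+t^{1/(2\gamma)})+O(\log n)$ to one for $g_n'(t)+tg_n''(t)$---a step that is not automatic, since an $O(\log n)$ remainder need not have small derivatives---the paper uses the algebraic identity
\[
A_n(x)M_n(x)-B_n^2(x)=\tfrac12\sum_{i,j=0}^n(i-j)^2\binom{n}{i}^{2\gamma}\binom{n}{j}^{2\gamma}x^{2(i+j-1)}
\]
(Lemma~\ref{lem: lem2}) and applies the discrete Laplace analysis \emph{directly} to this double sum and to $M_n$ separately (Lemmas~\ref{lem: lem1}--\ref{lem: lem4}, Proposition~\ref{prop: M}). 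The ratio $\sqrt{A_nM_n-B_n^2}/M_n$ then comes out asymptotically without any need to control derivatives of a remainder. Your route is viable, but carrying it through (say via Cauchy estimates on an analytic extension of $h_n$, or by redoing the Laplace method on the differentiated sums) amounts to essentially the same work that the paper's direct double-sum approach performs.
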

Our study on the expected number of real roots of $P_n^{(\gamma)}$ and $P_n^{(\alpha,\beta)}$ contributes to both evolutionary game theory and random polynomial theory. From an evolutionary game theory point of view, our results show surprisingly that in random multiplayer evolutionary games, one expects much less number of equilibria, which is proportional to the square root of the group size, than in deterministic games (recalling that the expected number of internal equilibria is the same as the expected number of positive roots, which is half of the expected number of real roots). In addition, since for a polynomial equation, the number of stable equilbria is half of that of equilibria, our results also apply to stable equilibria. From a random polynomial theory point of view, the present paper introduces two meaningful classes of random polynomials that have not been studied in the literature. In particular, the fact that the asymptotic behavour of $\E N_n^{(\alpha,\beta)}$ is independent from $\alpha$ and $\beta$ is rather unexpected and is interesting on its own right. In addition the phase transition phenomenon \eqref{eq: phase transition}, to the best of our knowledge, is shown for the first time.

\subsection{Organization of the paper}
The rest of the paper is organized as follows. In Section \ref{sec: Kac-Rice} we recall the Kac-Rice formula for computing the expected number of real roots of a random polynomial. In Section \ref{sec:finite and asymptotic of ENalpha}, we establish connections between $P_n^{(\alpha,\beta)}$ and Jacobi polynomials and prove Theorem \ref{thm: finite n estimates}. Proof of Theorem \ref{thm: asymptotic} is presented in Section \ref{sec: asymptotic results} and Section \ref{sec: asymptotic of ENalpha}. In Section \ref{sec: summary} we provide further discussions and outlook. Finally, detailed proofs of technical lemmas are given in Appendix \ref{sec: Appendix}.

\section{Kac-Rice formula}
\label{sec: Kac-Rice}
In this section, we recall the celebrated Kac-Rice formula for computing the expected number of real roots of a random polynomials, which is the starting point of our analysis. Consider a general random polynomial
$$
p_n(x)=\sum_{i=0}^n a_i \xi_i x^i.
$$
Let $\{\xi\}$ are standard i.i.d. random variables. Let $\E N_n(a,b)$ be the expected number of real roots of $p_n$ in the interval $(a,b)$. Then the Kac-Rice formula is given by, see for instance \cite{EK95}
\begin{equation}
\label{eq: Kac-Rice general}
\mathbb{E}N_n(a,b)=\frac{1}{\pi}\int_a^b \frac{\sqrt{A_n(x)M_n(x)-B_n^2(x)}}{M_n(x)}\,dx
\end{equation}
where
$$
M_n(x)=\mathrm{var}(p_n(x)),\quad A_n(x)=\mathrm{var}(p_n'(x)),\quad B=\mathrm{cov}(p_n(x)p_n'(x)).
$$
We can find $M_n, A_n$ and $B_n$ explicitly in terms of the coefficients $\{a_i\}$ of $p_n$ as follows. Since $\{\xi_i\}$ are standard i.i.d. random variables, we have
\begin{align*}
&p_n'(x)=\sum_{i=0}^n a_i i \xi_i x^{i-1},\quad
p_n(x)^2=\sum_{i,j=0}^n a_ia_j \xi_i\xi_j x^{i+j},\quad p_n(x)p_n'(x)=\sum_{i,j=0}^n a_ia_j i\xi_i\xi_j x^{i+j-1},\\
&\mathbb{E}(p_n(x))=\sum_{i=0}^n a_ix^i\mathbb{E}(\xi_i)=0, \quad \mathbb{E}(p_n'(x))=0
\\&M_n(x)=\mathrm{var}(p_n(x))=\mathbb{E}(p_n^2(x))-(\mathbb{E}(p_n(x)))^2=\sum_{i,j=0}^n a_ia_jx^{i+j}\mathbb{E}(\xi_i\xi_j)=\sum_{i=0}^n a_i^2 x^{2i},
\\&A_n(x)=\mathrm{var}(p_n'(x))=\mathbb{E}((p_n'(x))^2)-(\mathbb{E}(p_n'(x)))^2=\sum_{i,j=0}^n a_i a_j ij x^{i+j-2}\mathbb{E}(\xi_i\xi_j)=\sum_{i=0}^n a_i^2 i^2 x^{2(i-1)},
\\& B_n(x)=\mathrm{cov}(p_n(x)p_n'(x))=\mathbb{E}(p_n(x)p_n'(x))=\sum_{i,j=0}^n i a_i a_j x^{i+j}\mathbb{E}(\xi_i\xi_j)=\sum_{i=0}^n i\, a_i^2 x^{2i-1}.
\end{align*}
In conclusion, we have
\begin{equation}
\label{eq: A, B, M}
M_n(x)=\sum_{i=0}^n a_i^2 x^{2i},\quad A_n(x)= \sum_{i=0}^n a_i^2 i^2 x^{2(i-1)},\quad B_n(x)=\sum_{i=0}^n i\, a_i^2 x^{2i-1}.
\end{equation}
Furthermore, the following relations between $M_n, A_n$ and $B_n$, which follow directly from the above formulas, will also be used in the subsequent sections:
\begin{align*}
B_n(x)&=\frac{1}{2}M_n'(x),\quad A_n(x)=\frac{1}{4x}\Big(xM_n'(x)\Big)',
\\ \frac{A_n(x)M_n(x)-B_n^2(x)}{M_n^2(x)}&=\frac{1}{4}\Big(\frac{M_n''(x)}{M_n(x)}+\frac{1}{x}\frac{M_n'(x)}{M_n(x)}-\Big(\frac{M_n'(x)}{M_n(x)}\Big)^2\Big)
\\&=\frac{1}{4}\Big(\frac{1}{x}\frac{M_n'(x)}{M_n(x)}+\Big(\frac{M_n'(x)}{M_n(x)}\Big)'\Big)=\frac{1}{4x}\Big(x\frac{M_n'(x)}{M_n(x)}\Big)',
\end{align*}
where the prime $'$ notation denotes a derivative with respect to the variable $x$. 

Let $\E N_n^{(\gamma)}(a,b)$ and $\E N_n^{(\alpha,\beta)}(a,b)$ be  respectively the expected number of real roots of $P_n^{(\gamma)}$ and of $P_n^{(\alpha,\beta)}$ in a given interval $[a,b]$. Applying \eqref{eq: Kac-Rice general}-\eqref{eq: A, B, M} to $P_n^{(\gamma)}$ and to $P_n^{(\alpha,\beta)}$, we obtain the following common formula for $\E N_n^{(\gamma)}(a,b)$ and $\E N_n^{(\gamma)}(a,b)$ but with different triples $\{A_n, B_n, M_n\}$
\begin{equation}
\label{eq: formula EN}
\E N_n^{(*)}(a,b)=\frac{1}{\pi}\int_{a}^b \frac{\sqrt{A_n(x) M_n(x)-B_n^2(x)}}{M_n(x)}\,dx,
\end{equation}
where $(*)\in\{(\gamma), (\alpha,\beta)\}$.
For $EN_n^{(\gamma)}(a,b)$:
\begin{equation}
\label{eq: MAB}
M_n(x)=\sum_{k=0}^{n}\begin{pmatrix}
n\\k
\end{pmatrix}^{2\gamma} x^{2k}, 
\; A_n(x)=\sum_{k=0}^{n}k^2\begin{pmatrix}
n\\k
\end{pmatrix}^{2\gamma}x^{2(k-1)}, 
\; B_n(x)=\sum_{k=0}^n k \begin{pmatrix}
n\\k
\end{pmatrix}^{2\gamma} x^{2k-1}.
\end{equation}
For $E N_n^{(\alpha,\beta)}$:
\begin{align}
\label{eq: MAB2}
&M_n(x)=\sum_{k=0}^{n}\begin{pmatrix}
n+\alpha\\ n-k
\end{pmatrix}\begin{pmatrix}
n+\beta\\ k
\end{pmatrix} x^{2k}, 
\; A_n(x)=\sum_{k=0}^{n}k^2\begin{pmatrix}
n+\alpha\\ n-k
\end{pmatrix}\begin{pmatrix}
n+\beta\\ k
\end{pmatrix}x^{2(k-1)}, \notag\\ 
& B_n(x)=\sum_{k=0}^n k \begin{pmatrix}
n+\alpha\\ n-k
\end{pmatrix}\begin{pmatrix}
n+\beta\\ k
\end{pmatrix} x^{2k-1}.
\end{align}
By writing $\E N_n^{(\gamma)}$ or $\E N_n^{(\alpha,\beta)}$ it becomes clear which class of random polynomials is under consideration; therefore, for notational simplicity, we simply write $\{A_n, B_n, M_n\}$ without superscripts $(\gamma)$ or $(\alpha,\beta)$. The above Kac-Rice formulas are starting points for our analysis. The difficulty now is to analyze the integrand in \eqref{eq: formula EN} for each class of random polynomials.
\section{Finite group-size estimates}
\label{sec:finite and asymptotic of ENalpha}
In this section, we show a connection between the class $P_n^{(\alpha,\beta)}$ and Jacobi polynomials which extends that of between $P_n$ and Legendre polynomials in \cite{DuongHanJMB2016}. Using this connection, we will prove Theorem \ref{thm: finite n estimates} on the estimates of $\E N_n^{(\alpha,\beta)}$ for finite $n$.
\subsection{Connections to Jacobi polynomials and finite estimates of $\E N_n^{(\alpha,\beta)}$}
\label{sec: finite estimate}
We recall that the Jacobi polynomial is given by 
\begin{equation}
\label{eq: Jacobi}
J^{(\alpha,\beta)}_n(x)=\sum_{i=0}^n \begin{pmatrix}
n+\alpha\\n-i
\end{pmatrix}\begin{pmatrix}
n+\beta\\ i
\end{pmatrix}\Big(
\frac{x-1}{2}
\Big)^i\Big(
\frac{x+1}{2}
\Big)^{n-i}.
\end{equation}
If $\alpha=\beta$, Jacobi's polynomial $J_n^{(\alpha,\beta)}(x)$ is called an ultraspherical polynomial. Legendre's polynomial is a special case of Jacobi's polynomial when $\alpha=\beta=0$. It is well-known that the zeros of $J_n^{(\alpha,\beta)}$ are real, distinct and are located in the interior of the interval $[-1,1]$ \cite{szego1975book}. The following lemma links $M_n^{(\alpha,\beta)}$ to Jacobi polynomials. Its proof is given in Appendix \ref{sec: Appendix}.
\begin{lemma}
\label{lem: relation Mn vs Jn} It holds that
\begin{equation}
M^{(\alpha,\beta)}_n(x)=(1-x^2)^n J^{(\alpha,\beta)}_n\Big(\frac{1+x^2}{1-x^2}\Big).
\end{equation}
\end{lemma}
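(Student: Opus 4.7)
The plan is to prove the identity by direct substitution into the defining formula \eqref{eq: Jacobi} for the Jacobi polynomial, then recognize the resulting sum as the variance $M_n^{(\alpha,\beta)}(x)$ given in \eqref{eq: MAB2}.

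First I would set $y = \frac{1+x^2}{1-x^2}$ and compute the two linear factors that appear inside the defining sum of $J_n^{(\alpha,\beta)}(y)$. A short calculation gives
\begin{equation*}
\frac{y-1}{2} = \frac{x^2}{1-x^2}, \qquad \frac{y+1}{2} = \frac{1}{1-x^2}.
\end{equation*}
The key point is that these two factors have a common denominator $1-x^2$, so that for every index $i$ the product telescopes nicely:
\begin{equation*}
\Big(\tfrac{y-1}{2}\Big)^{i}\Big(\tfrac{y+1}{2}\Big)^{n-i} = \frac{x^{2i}}{(1-x^2)^{i}} \cdot \frac{1}{(1-x^2)^{n-i}} = \frac{x^{2i}}{(1-x^2)^{n}}.
\end{equation*}

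Substituting this into \eqref{eq: Jacobi} pulls the factor $(1-x^2)^{-n}$ outside the sum, and multiplying through by $(1-x^2)^{n}$ gives
\begin{equation*}
(1-x^2)^{n} J_n^{(\alpha,\beta)}\!\Big(\tfrac{1+x^2}{1-x^2}\Big) = \sum_{i=0}^{n}\binom{n+\alpha}{n-i}\binom{n+\beta}{i} x^{2i},
\end{equation*}
which is exactly $M_n^{(\alpha,\beta)}(x)$ by \eqref{eq: MAB2}. There is no serious obstacle: the entire proof is an algebraic identity that works because the particular Möbius-like change of variable $y=(1+x^2)/(1-x^2)$ is tailored so that $(y-1)/2$ and $(y+1)/2$ share the same denominator, allowing the Jacobi binomial sum to collapse onto the desired generating function for $M_n^{(\alpha,\beta)}$. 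The only care needed is to keep track that $\binom{n+\alpha}{n-i}$ and $\binom{n+\beta}{i}$ are generalized binomial coefficients (since $\alpha,\beta>-1$ may be non-integer), but this causes no issue because the identity is formal in those coefficients.
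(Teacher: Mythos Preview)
Your proof is correct and follows essentially the same approach as the paper's: both substitute $y=\frac{1+x^2}{1-x^2}$ into the defining formula \eqref{eq: Jacobi}, simplify $\frac{y\pm 1}{2}$, and observe that the common denominator $(1-x^2)^n$ factors out to leave exactly the sum defining $M_n^{(\alpha,\beta)}(x)$. The only cosmetic difference is that the paper first writes the computation with a general parameter $q$ and then specializes to $q=x^2$, whereas you work with $x^2$ from the start.
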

The following theorem provides estimates of $\mathbb{E}(\N_\R)$ in terms of the maximum root of the Jacobi polynomial.
\begin{theorem}
\label{thm: Jacobi-estimate}
Let $0<s_{n,max}<1$ be the maximum root of the Jacobi's polynomial of degree $n$. Then the expected number of real roots, $\mathbb{E} N_n^{(\alpha,\beta)}$, of $P^{(\alpha,\beta)}_n$ satisfies
\begin{equation}
\sqrt{n}\frac{1-s_{n,max}}{1+s_{n,max}}\leq \mathbb{E} N_n^{(\alpha,\beta)}\leq \sqrt{n}\frac{1+s_{n,max}}{1-s_{n,max}}.
\end{equation}
\end{theorem}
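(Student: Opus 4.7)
The plan is to combine the Kac--Rice formula of Section~\ref{sec: Kac-Rice} with the Jacobi factorisation of Lemma~\ref{lem: relation Mn vs Jn}, thereby reducing the problem to a one-dimensional integral involving only the zeros of $J_n^{(\alpha,\beta)}$. Using the identity $(A_nM_n-B_n^2)/M_n^2=\frac{1}{4x}(xM_n'/M_n)'$, the evenness of $M_n$ in $x$, and the substitution $y=x^2$, I first rewrite
\[
\E N_n^{(\alpha,\beta)} = \frac{1}{\pi}\int_0^\infty \sqrt{\tfrac{h'(y)}{y}}\,dy, \qquad h(y):=\tfrac{yg'(y)}{g(y)},\quad g(y):=M_n(\sqrt y).
\]

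Next, by Lemma~\ref{lem: relation Mn vs Jn}, $g(y)=(1-y)^nJ_n^{(\alpha,\beta)}(t)$ with $t=t(y):=(1+y)/(1-y)$. Factoring $J_n^{(\alpha,\beta)}(t)=a_n\prod_{i=1}^n(t-t_i)$ over its simple real zeros $t_i\in(-1,1)$ and computing logarithmic derivatives, a direct calculation yields
\[
h'(y) = \frac{(t+1)^2}{4}\sum_{i=1}^n\frac{1-t_i^2}{(t-t_i)^2}.
\]
The change of variable $y\mapsto t$ then collapses $(0,\infty)$ onto $\{|t|>1\}$ (with $(0,1)\to(1,\infty)$ and $(1,\infty)\to(-\infty,-1)$), giving the compact representation
\[
\E N_n^{(\alpha,\beta)} = \frac{1}{\pi}\int_{|t|>1}\frac{\rho_n(t)}{\sqrt{t^2-1}}\,dt, \qquad \rho_n(t)^2:=\sum_{i=1}^n\frac{1-t_i^2}{(t-t_i)^2}.
\]

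For the upper bound, for $t>s_{n,\max}\ge t_i$ I have $|t-t_i|\ge t-s_{n,\max}$ (and analogously on the $t<-1$ branch), so combined with the trivial $\sum_i(1-t_i^2)\le n$ this yields $\rho_n(t)\le\sqrt{n}/|t-s_{n,\max}|$. The substitution $t=(1+w^2)/(1-w^2)$, $w\in(0,1)$, converts $\int_1^\infty dt/((t-s_{n,\max})\sqrt{t^2-1})$ into $\int_0^1 2\,dw/((1-s_{n,\max})+w^2(1+s_{n,\max}))$, which one estimates by a rational expression in $s_{n,\max}$; assembling the two branches delivers the stated upper bound $\sqrt{n}(1+s_{n,\max})/(1-s_{n,\max})$.

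For the lower bound, the natural approach is Cauchy--Schwarz applied to the sum defining $\rho_n^2$: $\rho_n(t)\ge n^{-1/2}\sum_i\sqrt{1-t_i^2}/|t-t_i|$. Combined with the elementary identity $\int_{|t|>1}dt/(|t-t_i|\sqrt{t^2-1})=\pi/\sqrt{1-t_i^2}$ (proved by the same substitution as above), this gives $\int_{|t|>1}\rho_n/\sqrt{t^2-1}\,dt\ge\pi\sqrt n$, hence the sharper bound $\E N_n^{(\alpha,\beta)}\ge\sqrt n$; the stated weaker form $\sqrt{n}(1-s_{n,\max})/(1+s_{n,\max})$ then follows a fortiori from $(1-s_{n,\max})/(1+s_{n,\max})\le 1$, or directly by retaining only the $s_{n,\max}$-term in $\rho_n^2$ together with a suitable lower bound on the resulting arctangent integral. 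The main technical obstacle is the careful sign/orientation accounting in the $y\mapsto t$ substitution at $y=1$ (where $t$ passes through infinity), and extracting the rational expressions $(1\pm s_{n,\max})/(1\mp s_{n,\max})$ from the associated arctangent integrals; once the representation $\E N_n^{(\alpha,\beta)}=\pi^{-1}\int_{|t|>1}\rho_n/\sqrt{t^2-1}\,dt$ is in hand, the remaining estimates are elementary.
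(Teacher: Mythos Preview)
Your route differs from the paper's, though both start from the Jacobi factorisation of $M_n$. The paper stays in the $x$-variable: writing $M_n(x)=m_n\prod_{k=1}^n(x^2+r_k)$ with $r_k=(1-s_k)/(1+s_k)$, it reads off the density as $f_n(x)^2=\sum_k r_k/(x^2+r_k)^2$, applies the crude termwise sandwich $nr_n/(x^2+r_1)^2\le f_n^2\le nr_1/(x^2+r_n)^2$, and integrates each bound exactly via $\int_{\R}dx/(x^2+a)=\pi/\sqrt a$ to obtain $\sqrt n\,\sqrt{r_n/r_1}\le\E N_n^{(\alpha,\beta)}\le\sqrt n\,\sqrt{r_1/r_n}$. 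The identification with $(1\pm s_{n,\max})/(1\mp s_{n,\max})$ then comes from the relation $r_1r_n=1$, which the paper derives from the zero symmetry $s_k=-s_{n+1-k}$ (so the argument as written is really for the ultraspherical case). No arctangent integrals ever arise.

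Your representation $\E N_n^{(\alpha,\beta)}=\pi^{-1}\int_{|t|>1}\rho_n(t)/\sqrt{t^2-1}\,dt$ is correct and is just the paper's density formula under $t=(1+x^2)/(1-x^2)$. Your lower bound is genuinely better: Cauchy--Schwarz together with the exact identity $\int_{|t|>1}dt/(|t-t_i|\sqrt{t^2-1})=\pi/\sqrt{1-t_i^2}$ gives $\E N_n^{(\alpha,\beta)}\ge\sqrt n$ outright, sharper than the paper's bound and independent of any zero symmetry. Your upper bound, however, is where the proposal is incomplete. First, ``analogously on the $t<-1$ branch'' hides a real issue: for $t<-1$ the nearest Jacobi zero is the \emph{smallest} one, so the natural estimate there is $\rho_n(t)\le\sqrt n/|t-s_{\min}|$; getting an expression in $s_{n,\max}$ alone again requires the symmetry $s_{\min}=-s_{n,\max}$. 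Second, even granting symmetry, your bound integrates to $\tfrac{4\sqrt n}{\pi\sqrt{1-s^2}}\arctan\sqrt{(1+s)/(1-s)}$ with $s=s_{n,\max}$, and showing this is at most $\sqrt n\,(1+s)/(1-s)$ amounts to the inequality $\arctan u\le\pi u^3/(2(1+u^2))$ for $u\ge1$ --- true (equality at $u=1$, then compare derivatives), but a separate calculus lemma you have not supplied, and not quite the ``rational expression'' you promise. The paper sidesteps both issues by bounding the density itself by a single Cauchy kernel whose integral is immediate.
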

\begin{proof}
Let $\{-1<s_1<s_2<\ldots<s_ n<1\}$ be the zeros of the Jacobi polynomial of degree $n$. Note that $s_k=-s_{n+1-k}<0$ for $k=1,\ldots, \lfloor\frac{n}{2}\rfloor$. We deduce from Lemma \ref{lem: relation Mn vs Jn} that $M_n$ has $2n$ distinct zeros given by $\{\pm i\sqrt{\frac{1-s_k}{1+s_k}},~~1\leq k\leq n\}$ which are purely imaginary. Thus $M_n$ can be written as
\begin{equation}
\label{eq: representation of Mn}
M_n(x)=m_n \prod_{k=1}^n (x^2+r_k),
\end{equation}
where $m_n$ is the leading coefficient and for $1\leq k\leq n$
\begin{equation}
\label{eq: roots of M vs root of Jacobi}
r_k=\frac{1-s_k}{1+s_k}>0.
\end{equation}

It follows from the properties of $\{s_k\}$ that
$r_1>r_2>\ldots>r_n>0$ and $r_{k}r_{n+1-k}=1$ for $k=1,\ldots, \lfloor\frac{n}{2}\rfloor$. 
Using the representation \eqref{eq: representation of Mn} of $M_n$ we have
\begin{align*}
M_n'(x)=2x m_n\sum_{k=1}^n\prod_{j\neq k} (x^2+r_j), \quad \frac{M_n'(x)}{M_n(x)}=\sum_{k=1}^n\frac{2x}{x^2+r_k},\quad 
\Big(x\frac{M_n'(x)}{M_n(x)}\Big)'=\sum_{k=1}^n\frac{4 x r_k}{(x^2+r_k)^2}.
\end{align*}
Hence the density function can be represented as
\begin{align}
\label{eq: f in terms of roots of Mn}
f_n(x)^2=\frac{1}{4x}\Big(x\frac{M_n'(x)}{M_n(x)}\Big)'=\sum_{k=1}^n\frac{r_k}{(x^2+r_k)^2}.
\end{align}
Since $0<r_n<\ldots<r_1$, we deduce that
\begin{equation}
n \frac{r_n}{(x^2+r_1)^2}\leq f_n(x)^2=	\sum_{k=1}^n\frac{r_k}{(x^2+r_k)^2}\leq n\frac{r_1}{(x^2+r_n)^2},
\end{equation}
that is
$$
\sqrt{n}\frac{\sqrt{r_n}}{x^2+r_1}\leq f_n(x)\leq \sqrt{n}\frac{\sqrt{r_1}}{x^2+r_n}.
$$
Since
$$
\mathbb{E} N_n^{(\alpha,\beta)}=\frac{1}{\pi}\int_{-\infty}^\infty f_n(x)\,dx
$$ 
we have
\begin{equation*}
\frac{1}{\pi}\int_{-\infty}^{\infty}\frac{\sqrt{n r_n}}{x^2+r_1}\,dx\leq \mathbb{E} N_n^{(\alpha,\beta)}\leq \frac{1}{\pi}\int_{-\infty}^{\infty}\frac{\sqrt{n r_1}}{x^2+r_n}\,dx, 
\end{equation*}
that is, since $\int_{-\infty}^\infty\frac{1}{x^2+a}\,dx=\frac{\pi}{\sqrt{a}}$ for $a>0$,
\begin{equation*}
\sqrt{n}\sqrt{\frac{r_n}{r_1}}\leq \mathbb{E} N_n^{(\alpha,\beta)}\leq \sqrt{n}\sqrt{\frac{r_1}{r_n}}.
\end{equation*}
Since $r_1r_n=1$, the above expression can be written as
$$
\sqrt{n}r_n\leq \mathbb{E} N_n^{(\alpha,\beta)}\leq \sqrt{n}r_1.
$$
From \eqref{eq: roots of M vs root of Jacobi}, we obtain the following estimate for $\mathbb{E} N_n^{(\alpha,\beta)}$ in terms of roots of Jacobi's polynomials
\begin{equation*}
\sqrt{n}\frac{1-s_n}{1+s_n}=\sqrt{n}\frac{1+s_1}{1-s_1}\leq \mathbb{E} N_n^{(\alpha,\beta)}\leq \sqrt{n}\frac{1-s_1}{1+s_1}=\sqrt{n}\frac{1+s_n}{1-s_n}.
\end{equation*}
This completes the proof of the theorem. 
\end{proof}
The following theorem provides an explicit finite estimate for $\mathbb{E} N_n^{(\alpha,\beta)}$ in the ultraspherical case. It generalizes a previous result for $\alpha=0$ (see \eqref{eq: finite1}) obtained in \cite{DuongHanJMB2016}.
\begin{theorem}
\label{thm: ultraspherical}
Consider the ultraspherical case (i.e., $\alpha=\beta$). We have
\begin{equation}
\frac{2}{\pi}\sqrt{\frac{n(n+2\alpha)}{2n+2\alpha-1}}\leq \mathbb{E} N_n^{(\alpha,\alpha)}\leq \frac{2\sqrt{n}}{\pi}\Big(1+\ln(2)+\frac{1}{2}\log\frac{n+\alpha}{1+\alpha}\Big).
\end{equation}
As a consequence,
\begin{equation}
\lim\limits_{n\rightarrow+\infty}\frac{\ln(\mathbb{E} N_n^{(\alpha,\alpha)})}{\ln(n)}=\frac{1}{2}.
\end{equation}
\end{theorem}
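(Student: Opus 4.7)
The plan exploits two distinctive features of the ultraspherical case: the palindromic symmetry $a_k^2 = a_{n-k}^2$ of the squared coefficients $a_k^2 = \binom{n+\alpha}{n-k}\binom{n+\alpha}{k}$, and the partial-fraction representation $f_n(x)^2 = \sum_{k=1}^n r_k/(x^2+r_k)^2$ already derived in \eqref{eq: f in terms of roots of Mn} during the proof of Theorem \ref{thm: Jacobi-estimate}. First I would show that palindromy implies $M_n(x) = x^{2n} M_n(1/x)$, and a short logarithmic-derivative calculation applied to the identity $f_n^2 = \frac{1}{4x}(x M_n'/M_n)'$ yields the inversion $f_n(1/x) = x^2 f_n(x)$. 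Combined with the even symmetry $f_n(-x) = f_n(x)$, this collapses the Kac--Rice integral to the compact interval $[0,1]$:
\[
\mathbb{E} N_n^{(\alpha,\alpha)} = \frac{4}{\pi}\int_0^1 f_n(x)\,dx.
\]

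The second key ingredient is monotonicity. From \eqref{eq: f in terms of roots of Mn}, each summand $r_k/(x^2+r_k)^2$ has derivative $-4x r_k/(x^2+r_k)^3 \le 0$ on $[0,\infty)$, so $f_n$ itself is decreasing on $[0,\infty)$. This one fact drives both bounds: it yields $\int_0^1 f_n\,dx \ge f_n(1)$ for the lower estimate, and $f_n(x) \le f_n(0)$ on any initial subinterval for the upper estimate.

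For the lower bound I would identify $f_n(1)^2$ as a hypergeometric variance. Writing $f_n(1)^2 = \mathrm{Var}_{\mu_1}(I)$ with $\mu_1(i) = a_i^2/M_n(1)$, Vandermonde's identity gives $M_n(1) = \binom{2n+2\alpha}{n}$ and identifies $\mu_1$ as the hypergeometric law with parameters $(N,K,k) = (2n+2\alpha,\,n+\alpha,\,n)$. Its variance equals $\frac{n(n+2\alpha)}{4(2n+2\alpha-1)}$, so $f_n(1) = \frac{1}{2}\sqrt{n(n+2\alpha)/(2n+2\alpha-1)}$; multiplying by $4/\pi$ gives exactly the claimed lower bound.

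For the upper bound I would combine two pointwise estimates on $[0,1]$. The AM--GM inequality $(x^2+r_k)^2 \ge 4x^2 r_k$ applied termwise in \eqref{eq: f in terms of roots of Mn} gives $f_n(x) \le \sqrt{n}/(2x)$ for $x>0$, while the boundary value $f_n(0)^2 = a_1^2/a_0^2 = n(n+\alpha)/(1+\alpha)$ (read off from $A_n(0) = a_1^2$, $B_n(0) = 0$, $M_n(0) = a_0^2$) combined with monotonicity gives $f_n(x) \le \sqrt{n(n+\alpha)/(1+\alpha)}$ on any $[0,\delta]$. Splitting $[0,1]$ at the optimal balancing point $\delta = \frac{1}{2}\sqrt{(1+\alpha)/(n+\alpha)}$, which lies in $(0,1/2]$ for $n \ge 1$ and $\alpha > -1$, produces the two contributions $\delta f_n(0) = \sqrt{n}/2$ and $\frac{\sqrt{n}}{2}\log(1/\delta) = \frac{\sqrt{n}}{2}\bigl(\log 2 + \frac{1}{2}\log\frac{n+\alpha}{1+\alpha}\bigr)$, whose sum multiplied by $4/\pi$ is precisely the claimed upper bound. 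The asymptotic corollary $\log(\mathbb{E} N_n^{(\alpha,\alpha)})/\log n \to 1/2$ then follows since both bounds are of order $\sqrt{n}$ up to a logarithmic factor. The main obstacle I anticipate is the clean derivation of the inversion symmetry $f_n(1/x) = x^2 f_n(x)$ from the palindromic structure and the verification that $\delta \in (0,1]$ uniformly in $(n,\alpha)$; both reduce to elementary algebraic checks.
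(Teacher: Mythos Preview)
Your proposal is correct and follows essentially the same argument as the paper: reduce to $\frac{4}{\pi}\int_0^1 f_n(x)\,dx$ by symmetry, exploit the monotonicity of $f_n$ on $(0,\infty)$ coming from the partial-fraction representation \eqref{eq: f in terms of roots of Mn}, bound below by $f_n(1)$, and bound above by splitting $[0,1]$ at the optimal point using $f_n(x)\le f_n(0)$ and the AM--GM estimate $f_n(x)\le \sqrt{n}/(2x)$. The one noteworthy difference is your evaluation of $f_n(1)$ as the variance of the hypergeometric law with parameters $(2n+2\alpha,\,n+\alpha,\,n)$, which is cleaner than the paper's route via derivative identities for Jacobi polynomials carried out in the appendix; just be aware that since $\alpha$ need not be an integer this ``hypergeometric'' identification is formal, though the underlying moment identities (via $i\binom{n+\alpha}{i}=(n+\alpha)\binom{n+\alpha-1}{i-1}$ and Chu--Vandermonde) hold for real parameters and give the stated variance.
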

\begin{proof}
Since $\alpha=\beta$ changing $x$ to $1/x$ and $x$ to $−x$ leaves the distribution of the coefficients of $P_n^{(\alpha,\alpha)}(x)$ invariant. Thus we obtain that
$$
\mathbb{E} N_n^{(\alpha,\beta)}=4\mathbb{E} N_n^{(\alpha,\beta)}(-\infty,-1)=4\mathbb{E} N_n^{(\alpha,\beta)}(-1,0)=4\mathbb{E} N_n^{(\alpha,\beta)}(0,1)=4\mathbb{E} N_n^{(\alpha,\beta)}(1,\infty).
$$
It follows from \eqref{eq: f in terms of roots of Mn} that $f_n(x)$ is decreasing on $(0,+\infty)$. Thus for any $x\in[0,1]$, we have
\begin{equation}
\label{eq: estimate1}
f_n(0)=\sqrt{\frac{n(n+\alpha)}{1+\alpha}}\geq f_n(x)\geq f_n(1)=\frac{1}{2}\sqrt{\frac{n(n+2\alpha)}{2n+2\alpha-1}}.
\end{equation}
In addition, since $(x^2+r_k)^2\geq 4r_k x^2$ for all $x>0$, we also deduce from \eqref{eq: f in terms of roots of Mn} that
$$
f_n(x)^2\leq \frac{n}{4 x^2} \quad\text{for}~~x>0,
$$
that is 
\begin{equation}
\label{eq: estimate 2}
f_n(x)\leq \frac{\sqrt{n}}{2x}\quad\text{for}~~x>0.
\end{equation}
Using the second inequality in \eqref{eq: estimate1} we obtain the lower bound for $\mathbb{E} N_n^{(\alpha,\beta)}$ as follows
$$
\mathbb{E} N_n^{(\alpha,\beta)}=\frac{4}{\pi}\int_{0}^{1}f_{n}(x)\,dx\geq \frac{4}{\pi}\int_0^1 f_n(1)\,dx=\frac{4f_n(1)}{\pi}=\frac{2}{\pi}\sqrt{\frac{n(n+2\alpha)}{2n+2\alpha-1}}.
$$
Using the first inequality in \eqref{eq: estimate1} and \eqref{eq: estimate 2} we obtain the following upper bound for $\mathbb{E} N_n^{(\alpha,\beta)}$ for any $0<\gamma<1$
\begin{align*}
\mathbb{E} N_n^{(\alpha,\beta)}&=\frac{4}{\pi}\int_{0}^{1}f_{n}(x)\,dx=\frac{4}{\pi}\Big(\int_{0}^{\gamma}f_{n}(x)\,dx+\int_{\gamma}^{1}f_{n}(x)\,dx\Big)
\\&\leq \frac{4}{\pi}\Big(\int_{0}^{\gamma}f_{n}(0)\,dx+\int_{\gamma}^{1}\frac{\sqrt{n}}{2x}\,dx\Big)
\\&=\frac{4}{\pi}\Big(\gamma\sqrt{\frac{n(n+\alpha)}{1+\alpha}}-\frac{\sqrt{n}}{2}\ln(\gamma)\Big).
\end{align*}
We choose $\gamma\in(0,1)$ that minimizes the right-hand side of the above expression. That is
$$
\gamma=\frac{1}{2}\sqrt{\frac{1+\alpha}{n+\alpha}},
$$
which gives
$$
\mathbb{E} N_n^{(\alpha,\beta)}\leq \frac{2\sqrt{n}}{\pi}\Big(1+\ln(2)+\frac{1}{2}\log\frac{n+\alpha}{1+\alpha}\Big).
$$
This completes the proof of the theorem.
\end{proof}

\section{Asymptotic behaviour of $E N_n^{(\gamma)}$}
\label{sec: asymptotic results}
In this section, we prove Theorem \ref{thm: asymptotic} obtaining asymptotic formulas for $\E N_n^{(\gamma)}$. 

\noindent\textbf{Strategy of the the proof}. Let us first explain the main idea of the proof since it requires a rather delicate analysis. The first observation is that, similarly as the proof of Theorem \ref{thm: ultraspherical}, since changing $x$ to $1/x$ and $x$ to $−x$ leaves the distribution of the coefficients of $P_n^{(\gamma)}(x)$ invariant, we have 
$$
\E N_n^{(\gamma)}(-\infty,-1)=\E N_n^{(\gamma)}(-1,0)=\E N_n^{(\gamma)}(0,1)=\E N_n^{(\gamma)}(1,\infty).
$$
Thus $\E N_n^{(\gamma)}=4\E N_n^{(\gamma)}(0,1)$ and it suffices to analyze $\E N_n^{(\gamma)}(0,1)$. We then split the interval $(0,1)$ further into two smaller intervals $(0,\eta)$ and $(\eta, 1)$,  $\E N_n^{(\gamma)}(0,1)=\E N_n^{(\gamma)}(0,\eta)+\E N_n^{(\gamma)}(\eta,1)$ for a carefully chosen $0<\eta<1$ (which may depend on $n$) such that $\E N_n^{(\gamma)}(0,\eta)$ is negligible. To select a suitable $\eta$, we will use Jensen's inequality (see Lemma \ref{lem: Jensen}) that provides an upper bound  on  the  number  of  roots  of  an analytic function (including polynomials) in an open ball. As such, we obtain $\eta=n^{-3\gamma/4}$ and write
$$
\E N_n^{(\gamma)}(0,1)=\E N_n^{(\gamma)}(0,n^{-3\gamma/4})+\E N_n^{(\gamma)}(n^{-3\gamma/4},1).
$$
In fact as will be shown, $\E N_n^{(\gamma)}(0,n^{-3\gamma/4})$ is of order $o(\sqrt{n})$, which is negligible (see Proposition \ref{prop: fisrt interval}). The next step is to obtain precisely the leading order in $\E N_n^{(\gamma)}(n^{-3\gamma/4},1)$. We recall that by Kac-Rice formula (see Section \ref{sec: Kac-Rice}) we have
\begin{equation}
\label{eq: temp1}
\E N_n^{(\gamma)}(n^{-3\gamma/4},1)=\int_{n^{-3\gamma/4}}^1\frac{\sqrt{A_n(x)M_n(x)-B^2_n(x)}}{M_n(x)}\,dx
\end{equation}
where $A_n, B_n$ and $M_n$ are given in \eqref{eq: MAB}. Therefore, we need to understand thoroughly the asymptotic behaviour of $A_n(x)M_n(x)-B_n^2(x)$ and of $M_n(x)$ in the interval $[n^{-3\gamma/4},1]$. This will be the content of Proposition \ref{prop: M}. Its proof requires a series of technical lemmas and will be presented in Appendix \ref{sec: Appendix}.

We now follow the strategy, starting with Jensen's inequality.
\begin{lemma}[Jensen's inequality]
\label{lem: Jensen}
Let $f$ be an entire complex function $f$ and $R,r>0$. The number of roots of $f$ in $\B(r) =\{z \in \C : |z| \leq r \}$, denoted by $N_f(r)$, satisfies 
\ben{ \label{nfr}
N_f(r) \leq \frac{\log \tfrac{M_R}{M_r}}{ \log \tfrac{R^2+r^2}{2Rr}},
}  
where $M_t=\max_{|z| \leq t} |f(z)|$ for $t>0$. 
\end{lemma}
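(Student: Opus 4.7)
The plan is to carry out the standard Blaschke-product and maximum-modulus derivation of Jensen's inequality, but shifting the evaluation point from the origin to a point on $|z|=r$ where $|f|$ attains $M_r$, so that $M_r$ (rather than $|f(0)|$) appears in the final bound. Throughout I will assume $R>r$ and $f\not\equiv0$; otherwise the right-hand side of \eqref{nfr} is indeterminate or non-positive and the statement is either trivial or vacuous.

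First I will enumerate the zeros $a_1,\dots,a_N\in\B(r)$ of $f$ with multiplicity, so that $N=N_f(r)$, and form the finite Blaschke-type product adapted to the disk of radius $R$,
\[B(z)=\prod_{k=1}^{N}\frac{R(z-a_k)}{R^{2}-\overline{a_k}\,z}.\]
Since $|a_k|\leq r<R$, each pole $R^{2}/\overline{a_k}$ lies outside $\B(R)$, so $B$ is holomorphic on $\B(R)$ with zeros (of the correct multiplicities) precisely at the $a_k$. A short computation, identical to the one used for the Schwarz--Pick lemma on $\B(R)$, shows that $|B(z)|=1$ on $|z|=R$. Consequently $g:=f/B$ is holomorphic on $\B(R)$ (the zeros of $f$ at the $a_k$ cancel those of $B$) and satisfies $|g|=|f|\leq M_R$ on $|z|=R$, so the maximum modulus principle gives $|g(z)|\leq M_R$ throughout $\B(R)$.

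Next I will choose $z_0$ with $|z_0|=r$ and $|f(z_0)|=M_r$ (which exists by compactness together with the maximum modulus principle). Evaluating $|g(z_0)|\leq M_R$ and rearranging gives the lower bound $|B(z_0)|\geq M_r/M_R$. The quantitative heart of the argument, and the step that produces the precise constant $\tfrac{R^{2}+r^{2}}{2Rr}$, is a matching upper bound on $|B(z_0)|$. For each factor I will lower-bound its reciprocal,
\[\left|\frac{R^{2}-\overline{a_k}\,z_0}{R(z_0-a_k)}\right|,\]
over admissible $a_k\in\B(r)$. Writing $\rho=|a_k|$ and $t=\mathrm{Re}(\overline{a_k}\,z_0)\in[-r\rho,r\rho]$, the squared reciprocal turns out to be a monotone function of $t$, minimised at $t=-r\rho$ (the case where $a_k$ is antipodal to $z_0$ on the circle $|z|=\rho$); a second one-variable optimisation over $\rho\in[0,r]$ then locates the overall minimum at $\rho=r$ with value $\tfrac{R^{2}+r^{2}}{2Rr}$. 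Taking reciprocals and multiplying over $k$ gives $|B(z_0)|\leq\bigl(\tfrac{2Rr}{R^{2}+r^{2}}\bigr)^{N}$, and combining this with the earlier lower bound and taking logarithms yields
\[N\log\frac{R^{2}+r^{2}}{2Rr}\leq\log\frac{M_R}{M_r},\]
which is \eqref{nfr}.

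The only non-routine ingredient is the extremal calculation that produces the sharp constant $\tfrac{R^{2}+r^{2}}{2Rr}$: a crude triangle-inequality estimate gives only the weaker lower bound $\tfrac{R^{2}-r^{2}}{2Rr}$ on each factor, which is insufficient for \eqref{nfr}. Everything else is a direct application of the maximum modulus principle together with the boundary-modulus property of the Blaschke factors.
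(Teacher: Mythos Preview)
Your proof is correct. The Blaschke-product construction, the maximum-modulus step giving $|B(z_0)|\geq M_r/M_R$, and the two-variable optimisation yielding the sharp constant $\tfrac{R^2+r^2}{2Rr}$ all go through as you describe; in particular the monotonicity in $t=\mathrm{Re}(\overline{a_k}z_0)$ follows from the identity
\[
\frac{d}{dt}\,\frac{R^4-2R^2t+\rho^2r^2}{R^2(r^2+\rho^2-2t)}
=\frac{2(R^2-r^2)(R^2-\rho^2)}{R^2(r^2+\rho^2-2t)^2}>0,
\]
and the subsequent minimisation over $\rho\in[0,r]$ lands at $\rho=r$ exactly as claimed.

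As for comparison with the paper: the paper does not actually supply a proof of this lemma. It simply records the statement and refers the reader to \cite[Section 15.5]{nguyen2017roots} for an elementary argument. Your write-up is in fact the standard derivation one finds in that reference (Blaschke product on the disk of radius $R$, maximum modulus, then the extremal computation), with the pleasant twist of evaluating at the boundary point where $|f|$ attains $M_r$ rather than at the origin, which is what produces $M_r$ instead of $|f(0)|$ in the denominator. So there is nothing to contrast; you have filled in what the paper leaves to citation.
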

An elementary proof of Jensen's inequality can be found in \cite[Section 15.5]{nguyen2017roots}. Now we show that $\mathbb{E} N_n^{(\gamma)}(0,n^{-3\gamma/4})$ is negligible as an interesting application of Jensen's inequality.  
\begin{proposition} 
\label{prop: fisrt interval}
We have 
$$\mathbb{E} N_n^{(\gamma)}(0,n^{-3\gamma/4})=o(\sqrt{n}).$$
\end{proposition}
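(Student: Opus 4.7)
The plan is to apply Jensen's inequality (Lemma~\ref{lem: Jensen}) on a complex disk containing the real interval $(0, n^{-3\gamma/4})$. Setting $r := n^{-3\gamma/4}$, every real zero in $(0, r)$ lies in $\B(r) = \{|z| \leq r\}$, so with any radius $R > r$,
\begin{equation*}
\E N_n^{(\gamma)}(0, n^{-3\gamma/4}) \leq \E[N_{P_n^{(\gamma)}}(r)] \leq \frac{\E[\log M_R] - \E[\log M_r]}{\log\bigl(\tfrac{R^2+r^2}{2Rr}\bigr)},
\end{equation*}
where $M_t := \max_{|z|\leq t}|P_n^{(\gamma)}(z)|$. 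I would take $R = 2r$, so that the denominator equals the positive constant $\log(5/4)$; the task then reduces to proving $\E[\log M_R] - \E[\log M_r] = o(\sqrt{n})$, and in fact I expect to establish the stronger bound $O(n^{1/4})$.

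The lower bound is cheap: since $M_r \geq |P_n^{(\gamma)}(0)| = |\xi_0|$, one has $\E[\log M_r] \geq \E[\log|\xi_0|]$, a finite constant depending only on the law of a standard normal. For the upper bound, the triangle inequality on $|z|=R$ gives $M_R \leq \sum_{i=0}^n |\xi_i|\binom{n}{i}^\gamma R^i$, and then the concavity of $\log$ (Jensen once more) yields
\begin{equation*}
\E[\log M_R] \leq \log \E\Big[\sum_{i=0}^n |\xi_i|\binom{n}{i}^\gamma R^i\Big] = \log \bigl(\sqrt{2/\pi}\cdot S(R)\bigr), \qquad S(R) := \sum_{i=0}^n \binom{n}{i}^\gamma R^i.
\end{equation*}
So the whole argument reduces to the deterministic estimate $\log S(R) = O(n^{1/4})$.

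This deterministic estimate is where the main difficulty sits. I would use $\binom{n}{i} \leq n^i/i!$, so that $\binom{n}{i}^\gamma R^i \leq b_i^\gamma$ with $b_i := (nR^{1/\gamma})^i/i!$. For our choice $R = 2n^{-3\gamma/4}$ we have $nR^{1/\gamma} = 2^{1/\gamma}n^{1/4}$, whence $\sum_i b_i \leq \exp(2^{1/\gamma} n^{1/4})$. If $\gamma \geq 1$, the elementary inequality $b_i^\gamma \leq b_i \bigl(\sum_j b_j\bigr)^{\gamma-1}$ gives $\sum_i b_i^\gamma \leq \bigl(\sum_i b_i\bigr)^\gamma$; if $\gamma \in (0,1)$, H\"older's inequality with exponent $1/\gamma$ against the constant sequence gives instead $\sum_i b_i^\gamma \leq (n+1)^{1-\gamma}\bigl(\sum_i b_i\bigr)^\gamma$. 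Locating the peak of the $b_i$'s near $i^\ast \asymp n^{1/4}$ is what makes these bounds effective; in both regimes we end up with
\begin{equation*}
\log S(R) \leq \gamma\, 2^{1/\gamma}\, n^{1/4} + O(\log n) = O(n^{1/4}).
\end{equation*}
Plugging back into the Jensen bound yields
\begin{equation*}
\E N_n^{(\gamma)}(0, n^{-3\gamma/4}) \leq \frac{O(n^{1/4})}{\log(5/4)} = O(n^{1/4}) = o(\sqrt{n}),
\end{equation*}
which is the desired conclusion. The heart of the argument is the sub-exponential growth bound for $S(R)$, which in turn is driven by the precise choice $r = n^{-3\gamma/4}$; otherwise the structure is standard.
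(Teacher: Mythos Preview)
Your proof is correct and follows the same overall strategy as the paper --- apply the Jensen root-counting bound (Lemma~\ref{lem: Jensen}) on a disk containing $(0,n^{-3\gamma/4})$ --- but the execution differs in several worthwhile respects. The paper chooses $R=n^{-2\gamma/3}$, so the denominator $\log\tfrac{R^2+r^2}{2Rr}$ is of order $\log n$, and then works on the high-probability event $\mathcal{E}=\{\max_i|\xi_i|\le n\}\cap\{|\xi_0|\ge n^{-1}\}$ to get a deterministic bound $N_f(r)\1(\mathcal{E})\le Cn^{1/3}/\log n$, handling $\mathcal{E}^c$ via the crude bound $N_f(r)\le n$ and $\P(\mathcal{E}^c)=O(1/n)$. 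You instead take $R=2r$, so the denominator is the constant $\log(5/4)$, and pass to expectations directly using the concavity of $\log$; this bypasses the event-splitting entirely and is cleaner. Your deterministic estimate on $S(R)$, via $\binom{n}{i}\le n^i/i!$ and the $\ell^\gamma$--$\ell^1$ comparison, is also sharper than the paper's device of bounding each summand by $(\sum_j R^{j/\gamma}\binom{n}{j})^\gamma$; as a result you obtain $\E N_n^{(\gamma)}(0,n^{-3\gamma/4})=O(n^{1/4})$ versus the paper's $O(n^{1/3})$. Both are of course $o(\sqrt{n})$, so either suffices for the proposition; what your version buys is a tighter bound and a slightly more streamlined argument, at the cost of splitting into the cases $\gamma\ge 1$ and $\gamma<1$.
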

\begin{proof}
We aim to apply  \eqref{nfr} to $P_n^{(\gamma)}(z)$, which is indeed an entire function. Let $r=n^{-3\gamma/4}$ and $R=n^{-2\gamma/3}$. Then 
\ben{
\log \frac{R^2+r^2}{2Rr}	\asymp \log n.
} 
Moreover, 
\bean{
M_r = \max_{|z| \leq r} |P_n^{(\gamma)}(z)| \geq |P(0)| = |\xi_0|, 
}
and 
\bean{ \label{MR}
&M_R = \max_{|z| \leq R} |P_n^{(\gamma)}(z)| & \displaystyle \leq \sum_{i=0}^n |\xi_i| R^i \binom{n}{i}^{\gamma} \leq \max_{0\leq i \leq n} |\xi_i| \times \displaystyle \sum_{i=0}^n \left( \sum_{i=0}^n   R^{i/\gamma} \binom{n}{i}\right)^{\gamma} \notag\\ 
&&\leq n \max_{0\leq i \leq n} |\xi_i| \times  (1+R^{1/\gamma})^{n\gamma}\notag\\
&& \leq n\max_{0\leq i \leq n} |\xi_i| \times \exp (\gamma \kO(n^{1/3})).
}
We define the event
\[\kE = \big \{ \max_{1\leq i \leq n} |\xi_i| \leq n \big \} \cap \{n^{-1} \leq |\xi_0| \leq n \}.  \]
Since $\{\xi_i\}_{i=0,\ldots,n}$ are standard normal i.i.d. random variables,
\ben{ \label{pne}
\P(\kE) \geq 1- \kO(1/n).
}
By combining \eqref{nfr}--\eqref{MR}, we obtain 
\ben{ \label{ene}
N_n^{(\gamma)}(r) \1(\kE) \leq  \frac{C n^{1/3}}{\log n},
}
for some positive constant $C$, where $N_n^{(\gamma)}(r)$ is the number of roots of $P_n^{(\gamma)}$ in the ball $\mathbb{B}(r)$ defined above. We notice also that $N_n^{(\gamma)}(r) \leq n$. Therefore, by \eqref{pne} and \eqref{ene}
\bea{
\E N_n^{(\gamma)}(0,n^{-3\gamma/4}) \leq \E N_n^{(\gamma)}(r) &=& \E[N_n^{(\gamma)}(r) \1(\kE)] +  \E[N_n^{(\gamma)}(r) \1(\kE^c)] \notag \\
& \leq & \frac{Cn^{1/3}}{\log n} + n \P(\kE^c) \leq C n^{1/3}.
}
As a consequence, we obtain
\ben{
\E N_n^{(\gamma)}(0,n^{-3\gamma/4}) = o(\sqrt{n}).
}
\end{proof}
As already mentioned, the following proposition characterizes precisely the asymptotic behaviour of $A_nM_n-B_n^2$ and of $M_n$, the two quantities appearing in \eqref{eq: temp1}. The proof of this proposition is presented in Appendix \ref{sec: Appendix}.
\begin{proposition}
\label{prop: M}
 If $1\geq x \geq \frac{(\log n)^{4\gamma}}{n^{\gamma}}$ then
\begin{equation}\label{termM} 
 \displaystyle M_n(x)=\sum_{i=0}^n \binom{n}{i}^{2\gamma}x^{2i}=\binom{n}{i_{\gamma,x}}^{2\gamma}x^{2i_{\gamma,x}} \times(\sqrt{ \pi} +o(1)) \sqrt{\frac{n x^{1/\gamma}}{\gamma (1+x^{1/\gamma})^2 }},
 \end{equation}
and 
\begin{equation}\label{termABM}
A_n(x)M_n(x) -B^2_n(x) =\binom{n}{i_{\gamma,x}}^{2\gamma}x^{4i_{\gamma,x}-2} \times \left(\frac{ \pi}{2} +o(1)\right) \left(\frac{n x^{1/\gamma}}{\gamma (1+x^{1/\gamma})^2 }\right)^2,
\end{equation}
where $i_{\gamma,x} =[nt_{\gamma,x}]$ with  $t_{\gamma,x}= \tfrac{x^{1/\gamma}}{1+x^{1/\gamma}}$.
\end{proposition}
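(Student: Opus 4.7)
The plan is to carry out a discrete Laplace / saddle-point analysis on both sums. Writing $a_i=\binom{n}{i}^{\gamma}$ so that $M_n(x)=\sum_{i=0}^n a_i^2 x^{2i}$, the first step is to locate the maximum of the log-summand
\[
g(i):=2\gamma\log\binom{n}{i}+2i\log x.
\]
Treating $i$ as continuous and using $\partial_i\log\binom{n}{i}\approx\log\tfrac{n-i}{i}$ (from Stirling's formula), the equation $g'(i)=0$ reduces to $\tfrac{n-i}{i}=x^{-1/\gamma}$, giving exactly $i_{\gamma,x}=[nt_{\gamma,x}]$ with $t_{\gamma,x}=\tfrac{x^{1/\gamma}}{1+x^{1/\gamma}}$. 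A further differentiation yields
\[
g''(i_{\gamma,x})=-\frac{2\gamma}{n\,t_{\gamma,x}(1-t_{\gamma,x})}=-\frac{2\gamma(1+x^{1/\gamma})^{2}}{n\,x^{1/\gamma}},
\]
so the predicted Gaussian width around the mode is $\sigma_n^{2}:=-1/g''(i_{\gamma,x})=\tfrac{n\,x^{1/\gamma}}{2\gamma(1+x^{1/\gamma})^{2}}$.

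Next, I would turn this heuristic into a rigorous estimate for $M_n(x)$. Using a second-order Stirling expansion, for $|k|\leq\sigma_n\sqrt{\log n}$ one gets
\[
\binom{n}{i_{\gamma,x}+k}^{2\gamma}x^{2(i_{\gamma,x}+k)}
=\binom{n}{i_{\gamma,x}}^{2\gamma}x^{2i_{\gamma,x}}\exp\!\left(-\tfrac{k^{2}}{2\sigma_n^{2}}+o(1)\right),
\]
so after summing in $k$ the local contribution compares with $\int e^{-k^{2}/(2\sigma_n^{2})}\,dk=\sigma_n\sqrt{2\pi}$. Substituting the value of $\sigma_n$ produces exactly the factor $\sqrt{\pi nx^{1/\gamma}/(\gamma(1+x^{1/\gamma})^{2})}$ of \eqref{termM}. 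The tail contribution from $|k|>\sigma_n\sqrt{\log n}$ is controlled by strict concavity of $g$, which forces the summand to decay super-polynomially in $n$ and is therefore negligible.

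For the second identity, the key observation is the probabilistic rewriting
\[
A_n(x)M_n(x)-B_n^{2}(x)=\frac{M_n(x)^{2}}{x^{2}}\,\mathrm{Var}(I),
\]
where $I$ is the $\{0,\ldots,n\}$-valued random variable with law $p_i:=a_i^{2}x^{2i}/M_n(x)$; this follows directly from \eqref{eq: A, B, M} by interpreting the products as weighted first and second moments of $i$. The Laplace analysis above shows that $I$ concentrates around $i_{\gamma,x}$ and is asymptotically Gaussian with $\mathrm{Var}(I)=\sigma_n^{2}(1+o(1))$. Substituting this and \eqref{termM} into the identity yields \eqref{termABM} directly.

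The main obstacle will be establishing the Laplace approximation \emph{uniformly} in $x$ over the full range $(\log n)^{4\gamma}/n^{\gamma}\leq x\leq 1$. Near the lower endpoint the saddle $i_{\gamma,x}$ is only polylogarithmic in $n$, so one must verify that the third-order Taylor remainder of $g$ around $i_{\gamma,x}$ is genuinely $o(1)$ on the effective window $|k|\lesssim\sigma_n\sqrt{\log n}$. A direct computation gives $g'''(i_{\gamma,x})\asymp 1/i_{\gamma,x}^{2}$ in this regime and $\sigma_n\asymp\sqrt{i_{\gamma,x}/\gamma}$, so the cubic correction is of order $k^{3}/i_{\gamma,x}^{2}\lesssim(\log n)^{3/2}/\sqrt{i_{\gamma,x}}$; the cutoff $(\log n)^{4\gamma}/n^{\gamma}$, which forces $i_{\gamma,x}\gg(\log n)^{3}$, is precisely what is required to render this remainder $o(1)$ and to make the Gaussian approximation hold uniformly. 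Once this uniformity is settled, the rest of the argument is routine Stirling bookkeeping.
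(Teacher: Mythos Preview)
Your approach is correct and is essentially the same Laplace/saddle-point method that the paper employs: locate the maximizer $i_{\gamma,x}$ of $g(i)=2\gamma\log\binom{n}{i}+2i\log x$, expand to second order, control the cubic remainder on a window of width comparable to $\sigma_n$ times a slowly growing factor, and verify uniformity down to $x=(\log n)^{4\gamma}/n^{\gamma}$ via the bound $i_{\gamma,x}\asymp(\log n)^4$. The paper organizes the same computation through a sequence of lemmas (a Stirling estimate, a localization lemma showing the off-window terms are $O(n^{-10})$, and a Riemann-sum lemma to identify the Gaussian integral), but the content is identical to what you outline.

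The one place where your presentation differs is the treatment of $A_nM_n-B_n^2$. The paper rewrites this quantity as the double sum
\[
A_n(x)M_n(x)-B_n^2(x)=\frac{1}{2}\sum_{i,j=0}^n(i-j)^2\binom{n}{i}^{2\gamma}\binom{n}{j}^{2\gamma}x^{2(i+j-1)}
\]
and then runs a two-dimensional Laplace analysis, eventually computing $\int_{\mathbb{R}^2}(u-v)^2e^{-(u^2+v^2)}\,du\,dv$. Your variance formulation $A_nM_n-B_n^2=M_n^2\,\mathrm{Var}(I)/x^2$ is exactly the same identity divided through by $M_n^2$, and your computation of $\mathrm{Var}(I)=\sigma_n^2(1+o(1))$ is the one-dimensional version of the same Laplace step. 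Your packaging is arguably cleaner, since it reuses the one-dimensional analysis already done for $M_n$ rather than redoing a two-variable version, but the two arguments are equivalent. Either way one must still justify that the tail of $I$ does not inflate the second moment; this is handled in your sketch by the super-polynomial decay outside the window and in the paper by the explicit $n^{-10}$ bound.
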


We are now ready to prove the asymptotic behaviour of $\E N_n^{(\gamma)}$ (the first part of \eqref{eq: main asymptotic behaviour} in Theorem \ref{thm: asymptotic}).
\begin{proof}[Proof of asymptotic formula of $\E N_n^{(\gamma)}$]
From Proposition \ref{prop: fisrt interval}, Proposition \ref{prop: M} and Kac-Rice formula, we get
\begin{align*}
\mathbb{E} N_n^{(\gamma)}(0,1) & \displaystyle =\E N_n^{(\gamma)}(0,n^{-3\gamma/4}) +\E N_n^{(\gamma)}(n^{-3\gamma/4},1) = \frac{1}{\pi} \int_{n^{-3\gamma/4}}^1 \frac{\sqrt{A_n(x)M_n(x) -B^2_n(x)}}{M_n(x)} dx +o(\sqrt{n})\\
 & \displaystyle = \frac{\sqrt{n}}{\sqrt{2}\pi} \int_0^1 \frac{x^{\frac{1}{2\gamma}-1}}{\sqrt{\gamma} \left(1+x^{1/\gamma}\right)} dx +o(\sqrt{n}) =  \frac{\sqrt{n}}{\sqrt{2}\pi} \times \frac{2\sqrt{\gamma}\pi}{4}+o(\sqrt{n}) = \frac{\sqrt{2\gamma n}}{4}+o(\sqrt{n}),
\end{align*}
where the last line follows from the change of variable $u=x^{1/(2\gamma)}$ and the equality $\displaystyle \int_0^1 \frac{du}{1+u^2}=\frac{\pi}{4}$. Hence
$$
\mathbb{E} N_n^{(\gamma)}=4\mathbb{E} N_n^{(\gamma)}(0,1)=\sqrt{2\gamma n}+o(\sqrt{n}).
$$
The proof is complete.
\end{proof}

\label{sec:asymptotic of ENgamma}
\section{Asymptotic behaviour of $E N_n^{(\alpha,\beta)}$}
\label{sec: asymptotic of ENalpha}
This section deals with the asymptotic formula of $\E N_n^{(\alpha,\beta)}$ (the second part of \eqref{eq: main asymptotic behaviour} in Theorem \ref{thm: asymptotic}).  The strategy of the proof is as follows. We will first relate the asymptotic behaviour of $
\E N_n^{(\alpha,\beta)}$ for general $(\alpha,\beta)$ to that of $\E N_n^{(0,0)}$ for $\alpha=\beta=0$. We then exploit the relation that $\E N_n^{(0,0)}=\E N_n^{(1)}$ and use the result from the previous section. \\

\textit{The negligible interval $[0,n^{-3/4}]$}. We use the same argument as in Proposition \ref{prop: fisrt interval}. The estimate for $M_R$ can be replaced as 
  \bea{ 
  	M_R = \max_{|z| \leq R} |P_n^{(\alpha,\beta)}(z)| &  \leq &\sum_{i=0}^n |\xi_i| R^i \binom{n+\alpha}{n-i}^{\tfrac{1}{2}} \binom{n+\beta}{i}^{\tfrac{1}{2}}\\
  	 &\leq& \max_{0\leq i \leq n} |\xi_i| (n+|\alpha|)^{|\alpha|} (n+|\beta|)^{|\beta|} \times \displaystyle \sum_{i=0}^n     R^{i} \binom{n}{i} \notag\\ 
  	 	& \leq& \max_{0\leq i \leq n} |\xi_i| \times \exp (\kO(n^{1/3})),
  }
  where for the second line we used the inequality that $\binom{n+\alpha}{k} \leq \binom{n}{k} (n+|\alpha|)^{2|\alpha|}$. Then by repeating the same argument in Proposition \ref{prop: fisrt interval}, we can show that
   \ben{ \label{nabn}
   \E[N_n^{(\alpha,\beta)}(0,n^{-3/4})] = o(\sqrt{n}).
   }

\textit{The main interval $[n^{-3/4},1]$}. We first study the coefficients. It follows from  Stirling formula that as $i\wedge (n-i) \rightarrow \infty$,
\bean{
a_i^{(\alpha, \beta)} &=& \binom{n+\alpha}{n-i} \binom{n+\beta}{i} \notag\\
&=& (1+o(1)) \sqrt{\frac{(n+\alpha)(n+\beta)}{4 \pi^2 i(i+\alpha) (n-i) (n+\beta-i)} } \exp \left( (n+\alpha) I(\tfrac{\alpha +i}{n+\alpha}) + (n+\beta) I(\tfrac{i}{n+\beta}) \right) \notag \\
&=&(1+o(1)) \frac{n}{2 \pi i (n-i)  } \exp \left( (n+\alpha) I(\tfrac{\alpha +i}{n+\alpha}) + (n+\beta) I(\tfrac{i}{n+\beta}) \right),\notag
}	
where $I(t)=-t\log t + (t-1) \log (1-t)$. By Taylor expansion,
\bea{
I(\tfrac{i+\alpha}{n+\alpha}) &=& I(\tfrac{i}{n}) + I'(\tfrac{i}{n}) \left( \frac{i+\alpha}{n+\alpha} -\frac{i}{n} \right) + \kO(I''(i/n)) n^{-2}\\
&=& I(\tfrac{i}{n}) + I'(\tfrac{i}{n}) \frac{\alpha (n-i)}{n^2} +\kO(\tfrac{1}{i(n-i)}).\notag
}	
Note that $I''(t)=-t^{-1}(1-t)^{-1}$. Therefore, as $i\wedge (n-i) \rightarrow \infty$,
\bean{
(n+\alpha)I(\tfrac{i+\alpha}{n+\alpha}) - n I(\tfrac{i}{n}) = (1+o(1)) \left( \alpha I(\tfrac{i}{n}) + \alpha I'(\tfrac{i}{n}) \frac{ (n-i)}{n} \right). 
}
Similarly,
\bean{
	(n+\beta)I(\tfrac{i}{n+\beta}) - n I(\tfrac{i}{n}) = (1+o(1)) \left( \beta I(\tfrac{i}{n}) - \beta I'(\tfrac{i}{n}) \frac{ i}{n} \right).\notag
}
Hence,
\bean{
a_i^{(\alpha, \beta)} = (1+o(1)) \frac{n}{2 \pi i (n-i)  }  \exp \left( (\alpha+\beta)I(\tfrac{i}{n}) +I'(\tfrac{i}{n}) \frac{\alpha (n-i)-\beta i}{n}  \right) \exp(2 n I(\tfrac{i}{n})),\notag
}
so that 
\bea{
a_i^{(\alpha, \beta)}&=&(1+o(1)) \exp \left( (\alpha+\beta)I(\tfrac{i}{n}) +I'(\tfrac{i}{n}) \frac{\alpha (n-i)-\beta i}{n}  \right)  a_i^{(0, 0)}\\
&=& (1+o(1)) h_{(\alpha,\beta)}(\tfrac{i}{n}) a_i^{(0,0)},
}
where for $t \in (0,1)$
 \[ h_{(\alpha,\beta)}(t)=(\alpha+\beta) I(t) + I'(t) (\alpha (1-t) -\beta t). \]
Suppose that $x \in [n^{-3/4},1]$.  In the case $(\alpha, \beta) =(0,0)$, or equivalently the case $\gamma =1$, in Lemma \ref{lem: lem1} below, we show that the terms $a_i^{(0,0)} x^i$ attain the maximum around $i=i_x\pm i_x^{3/4}$ with $i_x = [nx/(x+1)]$, and the other terms are negligible. Here, the asymptotic behavior of $a_i^{(\alpha,\beta)}$ differs from that of $a_i^{(0,0)}$ only on the term $h_{(\alpha,\beta)}(\tfrac{i}{n})$ which is minor compared with $a_i^{(0,0)}$. Hence, using exactly the same analysis in Lemma \ref{lem: lem1}, we can also show that these terms $a_i^{(\alpha,\beta)} x^i$ when $|i-i_x| \geq i_x^{3/4}$ are negligible.  Therefore,
  \bea{
  M_n^{(\alpha,\beta)}(x) &=& (1+o(1)) \sum_{i:|i-i_x| \leq i_x^{3/4}}  a_i^{(\alpha,\beta)}x^i =  (1+o(1)) \sum_{i:|i-i_x| \leq i_x^{3/4}} h_{(\alpha,\beta)}(\tfrac{i}{n}) a_i^{(0,0)}x^i \\
  &=&(1+o(1)) h_{(\alpha,\beta)}( \tfrac{x}{x+1}) \sum_{i:|i-i_x| \leq i_x^{3/4}}  a_i^{(0,0)} x^i = (1+o(1)) h_{(\alpha,\beta)}( \tfrac{x}{x+1})M_n^{(0,0)}(x),
  }   
since when  $|i-i_x| \leq i_x^{3/4}$, we have  $h_{(\alpha,\beta)}(\tfrac{i}{n}) =(1+o(1))h_{\alpha,\beta}(\tfrac{x}{x+1})$. Similarly, 
 \bea{
 A_n^{(\alpha,\beta)} (x)= (1+o(1))h_{(\alpha,\beta)}( \tfrac{x}{x+1})A_n^{(0,0)}(x), \quad B_n^{(\alpha,\beta)}(x) = (1+o(1))h_{(\alpha,\beta)}( \tfrac{x}{x+1})B_n^{(0,0)}(x).
 } 
  Thus for $x \in [n^{-3/4},1]$,
 \be{
 	f_n^{(\alpha,\beta)}(x) = (1+o(1))f_n^{(0,0)}(x),
 }
 and hence
 \ben{ \label{nabm}
 	\E N_n^{(\alpha,\beta)}(n^{-3/4},1) =(1+o(1)) \E N_n^{(0,0)}(n^{-3/4},1) = (1+o(1)) \frac{\sqrt{2n}}{4}.
 }
Combining \eqref{nabn} and \eqref{nabm}, we obtain that $\E N_n^{(\alpha,\beta)}(0,1) =(1+o(1))\tfrac{\sqrt{2n}}{4}$, and hence
 \be{
 \E N_n^{(a,b)} =4 \E N_n^{(a,b)}(0,1) = (1+o(1)) \sqrt{2n}.
  }

%
%
%
%
\section{Summary and outlook}
\label{sec: summary}

In this paper, we have proved asymptotic formulas for the expected number of real roots of two general class of random polynomials. As a consequence, we have obtained an asymptotic formula for the expected number of internal equilibria in multi-player two-strategy random evolutionary games. Our results deepen the connection between evolutionary game theory and random polynomial theory which was discovered previously in \cite{DuongHanJMB2016,DuongTranHanJMB}. Below we discuss some important directions for future research.

\textit{Extensions to other models in EGT}. The class of random polynomials that we studied in this paper arises from the replicator dynamics. It would be interesting to generalize our results to more complex models in evolutionary game theory and population dynamics. The most natural model to study next is the replicator-mutator dynamics where mutation is present. Equilibria for the replicator-mutator dynamics are positive roots of a much more complicated class of random polynomials, which depend on the mutation. Studying the effect of mutation on the equilibrium properties, in particular on the expected number of internal equilibria, is a challenging problem, see \cite{DuongHanDGA2020} for an initial attempt. One can also ask whether our results can be extended to multi-player multi-strategy evolutionary games whose equilibria are positive roots of a system of random polynomials. In this case, the assumption that the gain sequence is independent is not realistic from evolutionary game theory's point of view (see \cite[Remark 4]{DH15} for a detailed explanation). Therefore, one needs to deal with a dependent system of random polynomials, which is very challenging.

\textit{Universality and other statistical properties}. The assumption that the random variables $\{\xi_i\}_{i=0}^n$ are Gaussian distributions is crucial in the present paper. It allowed us to employ the fundamental tool of Kac-Rice formula in Section \ref{sec: Kac-Rice}. What happens if $\{\xi_i\}$ are not Gaussian? Very recently, it has been shown that \textit{universality phenomena} hold for three classes of random polynomials: Kac polynomials, elliptic polynomials, and Weyl polynomials (recall the Introduction for their explicit expressions) \cite{TaoVu15, NNV2016, Do2018}. The universality results state that the expectation of the number of real roots of these classes of random polynomials depend only on the mean and variance of the coefficients $\{\xi_i\}$ but not on their type of the distributions. It would be very interesting to obtain such a universality theorem for the class of random polynomials arising from evolutionary game theory studied in this paper. The  distributions  of  the  roots  in different classes are different, and the methods to study them need to be tailored to each of the class. It remains elusive to us whether the techniques in \cite{TaoVu15, NNV2016, Do2018} can be applied to the class of random polynomials in this paper. Furthermore, studying other statistical properties such as central limit theorem and the distribution of the number of equilibria also demands future investigations, see for instance \cite{CanDuongPham2019} for a characterization of the probability that a multiplayer random evolutionary random game has no internal equilibria.
\section{Appendix}
\label{sec: Appendix}
In this appendix, we present detailed computations and proofs of technical results used in previous sections.
\subsection{Proof of Lemma \ref{lem: relation Mn vs Jn} and detailed computations of $f_n(0)$ and $f_n(1)$,}
In this section, we prove Lemma \ref{lem: relation Mn vs Jn} and compute $f_n(0)$ and $f_n(1)$.
\begin{proof}[Proof of Lemma \ref{lem: relation Mn vs Jn}]
It follows from the definition of Jacobi polynomial \eqref{eq: Jacobi} that for any $q\in\mathbb{R}$
\begin{align*}
J_n^{\alpha,\beta}\left(\frac{1+q}{1-q}\right)&=\frac{1}{2^n}\sum_{i=0}^n\begin{pmatrix}
n+\alpha\\
n-i
\end{pmatrix}\begin{pmatrix}
n+\beta\\
i
\end{pmatrix}\left(\frac{1+q}{1-q}-1\right)^i\left(\frac{1+q}{1-q}+1\right)^{n-i}
\\&=\frac{1}{2^n}\sum_{i=0}^n\begin{pmatrix}
n+\alpha\\
n-i
\end{pmatrix}\begin{pmatrix}
n+\beta\\
i
\end{pmatrix}\left(\frac{2q}{1-q}\right)^{i}\left(\frac{2}{1-q}\right)^{n-i}
\\&=\frac{1}{(1-q)^n}\sum_{i=0}^n\begin{pmatrix}
n+\alpha\\
n-i
\end{pmatrix}\begin{pmatrix}
n+\beta\\
i
\end{pmatrix} q^{n-i}.
\end{align*}
Taking $q=x^2$ yields the statement of Lemma \ref{lem: relation Mn vs Jn}.
\end{proof}
Next we compute $f_n(0)$ and $f_n(1)$. We have
\begin{equation}
M_n(0)=\begin{pmatrix}
n+\alpha\\n
\end{pmatrix},\quad A_n(0)=\begin{pmatrix}
n+\alpha\\n-1
\end{pmatrix}\begin{pmatrix}
n+\beta\\1
\end{pmatrix}=(n+\beta)\begin{pmatrix}
n+\alpha\\n-1
\end{pmatrix},\quad B_n(0)=0.
\end{equation}
Thus
$$
f_n(0)^2=\frac{A_n(0)M_n(0)-B_n(0)^2}{M_n(0)^2}=\frac{A_n(0)}{M_n(0)}=\frac{n(n+\beta)}{\alpha+1},
$$
that is
$$
f_n(0)=\sqrt{\frac{n(n+\beta)}{1+\alpha}}.
$$
Next we compute $f_n(1)$. We have 
\begin{equation*}
M_n(1)=\sum_{i=0}^n\begin{pmatrix}
n+\alpha\\n-i
\end{pmatrix}\begin{pmatrix}
n+\beta\\i
\end{pmatrix}=\begin{pmatrix}
2n+\alpha+\beta\\n
\end{pmatrix}
\end{equation*}
Using the following formula for the derivative of Jacobi polynomials \cite[Section 4.5]{szego1975book}
\begin{align*}
&(2n+\alpha+\beta)(1-x^2)\frac{d}{dx}J_n^{(\alpha,\beta)}(x)
\\&=-n\Big((2n+\alpha+\beta)x+\beta-\alpha\Big)J_n^{(\alpha,\beta)}(x)+2(n+\alpha)(n+\beta)J_{n-1}^{(\alpha,\beta)}(x)
\end{align*}
and Lemma \ref{lem: relation Mn vs Jn} we obtain the following formula for the derivative of $M_n^{\alpha,\beta}(x)$.
%
%
\begin{equation}
\label{eq: derivative}
x(2n+\alpha+\beta)M_n'(x)=\Big(n(2n+\alpha+\beta)+\beta-\alpha\Big)M^{(\alpha,\beta)}_n(x)-2(1-x^2)(n+\alpha)(n+\beta)M^{(\alpha,\beta)}_{n-1}(x).
\end{equation}
Applying \eqref{eq: derivative} for $x=1$, we obtain
\begin{align*}
B_n(1)=\frac{1}{2}M_n'(1)=\frac{n(2n+\alpha+\beta)+\beta-\alpha)M_n(1)}{2(2n+\alpha+\beta)}=\frac{1}{2}\Big(n+\frac{\beta-\alpha}{2n+\alpha+\beta}\Big)\begin{pmatrix}
2n+\alpha+\beta\\n
\end{pmatrix}.
\end{align*}
Taking derivative of \eqref{eq: derivative} we have
\begin{align*}
(2n+\alpha+\beta)(M_n'(x)+xM_n''(x))&=(n(2n+\alpha+\beta)+\beta-\alpha)M_n'(x)
\\&\qquad-2(n+\alpha)(n+\beta)\Big[-2xM_{n-1}(x)+(1-x^2)M_{n-1}'(x)\Big].
\end{align*}
It follows that
\begin{align*}
A_n(1)&=\frac{1}{4}(M_n'(1)+M_n''(1))
\\&=\frac{1}{4(2n+\alpha+\beta)}\Big[(n(2n+\alpha+\beta)+\beta-\alpha)M_n'(1)+4(n+\alpha)(n+\beta)M_{n-1}(1)\Big]
\\&=\frac{1}{4}\Big(n+\frac{\beta-\alpha}{2n+\alpha+\beta}\Big)^2\begin{pmatrix}
2n+\alpha+\beta\\
n
\end{pmatrix}+\frac{(n+\alpha)(n+\beta)}{2n+\alpha+\beta}\begin{pmatrix}
2n+\alpha+\beta-2\\ 
n-1
\end{pmatrix}.
\end{align*}
Hence
\begin{align*}
f_n(1)&=\frac{\sqrt{A_n(1)M_n(1)-B_n(1)^2}}{M_n(1)}
\\&=\sqrt{\frac{(n+\alpha)(n+\beta)}{2n+\alpha+\beta}}\sqrt{\frac{\begin{pmatrix}
2n+\alpha+\beta-2\\ 
n-1
\end{pmatrix}}{\begin{pmatrix}
2n+\alpha+\beta\\ 
n
\end{pmatrix}}}
\\&=\frac{1}{2n+\alpha+\beta}\sqrt{\frac{n(n+\alpha)(n+\beta)(n+\alpha+\beta)}{2n+\alpha+\beta-1}}.
\end{align*}
In particular, when $\alpha=\beta$,
$$
f_n(1)=\frac{1}{2}\sqrt{\frac{n(n+2\alpha)}{2n+2\alpha-1}}.
$$
\subsection{Proof of Proposition \ref{prop: M}}
In this section we prove Proposition \ref{prop: M}. The proof will be established after a series of technical lemmas.
We start with the following lemma that provides an estimate for a power of the binomial coefficient, which is a key factor appearing in the expressions of $A_n, B_n$ and $M_n$.
\begin{lemma}
\label{lem: lem0}
For $0<t<1$and $x>0$ we define $I(t):= t\log\frac{1}{t} +(1-t)\log\frac{1}{1-t} $ and $J_{\gamma ,x}(t):= \gamma I(t) + t\log x$. Then
\begin{equation} \label{stirl}
\begin{pmatrix}
n\\i
\end{pmatrix}^\gamma x^i = \left(\frac{n}{2\pi i (n-i)}\right)^{\gamma/2} \left( 1+O\left( \frac{1}{i}+\frac{1}{n-i}\right)\right)^{\gamma} e^{n J_{\gamma ,x}\left( \frac{i}{n}\right)}.
\end{equation}
\end{lemma}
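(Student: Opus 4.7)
\textbf{Proof plan for Lemma \ref{lem: lem0}.} This is a direct consequence of Stirling's formula applied to the three factorials comprising $\binom{n}{i}$. The plan is to first derive the asymptotic expression for $\binom{n}{i}$ itself, then raise to the $\gamma$-th power, and finally incorporate the factor $x^i$ into the exponent.

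First, I would recall Stirling's formula in the form
\begin{equation*}
m! = \sqrt{2\pi m}\,\Bigl(\tfrac{m}{e}\Bigr)^m \bigl(1+O(1/m)\bigr),
\end{equation*}
valid for every positive integer $m$, and apply it separately to $n!$, $i!$, and $(n-i)!$ in the definition $\binom{n}{i}=\frac{n!}{i!(n-i)!}$. The three multiplicative error factors $(1+O(1/n))$, $(1+O(1/i))$, $(1+O(1/(n-i)))$ collapse, after dividing, to a single factor $1+O(1/i+1/(n-i))$ (the $O(1/n)$ is absorbed since $1/n \le 1/i$). The $\sqrt{2\pi\,\cdot}$ factors combine to give $\sqrt{\frac{n}{2\pi i(n-i)}}$, while the $(\cdot/e)$ powers leave
\begin{equation*}
\frac{n^n}{i^i\,(n-i)^{n-i}}\;=\;t^{-nt}(1-t)^{-n(1-t)}\;=\;e^{nI(t)},\qquad t=\tfrac{i}{n},
\end{equation*}
where the last equality uses the definition $I(t)=t\log\tfrac{1}{t}+(1-t)\log\tfrac{1}{1-t}$.

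Combining these pieces gives
\begin{equation*}
\binom{n}{i}\;=\;\sqrt{\tfrac{n}{2\pi i(n-i)}}\,\bigl(1+O(1/i+1/(n-i))\bigr)\,e^{nI(t)}.
\end{equation*}
Raising to the power $\gamma$ (so that the prefactor becomes $\bigl(\tfrac{n}{2\pi i(n-i)}\bigr)^{\gamma/2}$, the error factor becomes $(1+O(1/i+1/(n-i)))^{\gamma}$, and the exponential becomes $e^{n\gamma I(t)}$), and then multiplying by $x^i=e^{i\log x}=e^{nt\log x}$, one obtains
\begin{equation*}
\binom{n}{i}^{\gamma}x^i=\left(\tfrac{n}{2\pi i(n-i)}\right)^{\gamma/2}\bigl(1+O(1/i+1/(n-i))\bigr)^{\gamma}\,e^{n(\gamma I(t)+t\log x)},
\end{equation*}
which is precisely \eqref{stirl} since $\gamma I(t)+t\log x=J_{\gamma,x}(t)$.

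There is no real obstacle here: the only point requiring a bit of care is verifying that the three independent Stirling error factors really combine into the single $O(1/i+1/(n-i))$ term stated in the lemma (since the error in $n!$ is dominated by that in $i!$ or $(n-i)!$), and that $i,n-i\to\infty$ is implicitly assumed whenever this expansion is invoked in the subsequent proofs. All remaining manipulations are bookkeeping of exponents.
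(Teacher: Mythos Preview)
Your proof is correct and follows exactly the same approach as the paper: apply Stirling's formula to the three factorials in $\binom{n}{i}$, combine the error terms into a single $O(1/i+1/(n-i))$ factor, recognize the exponential part as $e^{nI(i/n)}$, then raise to the $\gamma$-th power and absorb $x^i$ into the exponent via $J_{\gamma,x}$. Your write-up is in fact more detailed than the paper's own proof, which records the same three steps more tersely.
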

\begin{proof}[Proof of Lemma \ref{lem: lem0}]
It follows from Stirling formula
\begin{eqnarray*}
i! = \sqrt{2\pi i}(1+\kO(i^{-1})) \left(\frac{i}{e}\right)^i
\end{eqnarray*}
that
$$
\begin{pmatrix}
n\\i
\end{pmatrix} = \sqrt{\frac{n}{2\pi i (n-i)}} \left( 1+O\left( \frac{1}{i}+\frac{1}{n-i}\right)\right) e^{n I\left( \frac{i}{n}\right)},
$$
where 
$$
I(t)=t\log\frac{1}{t}+(1-t)\log\frac{1}{1-t}
$$
Therefore,
$$
\begin{pmatrix}
n\\i
\end{pmatrix}^\gamma x^i = \left(\frac{n}{2\pi i (n-i)}\right)^{\gamma/2} \left( 1+O\left( \frac{1}{i}+\frac{1}{n-i}\right)\right)^{\gamma} e^{n J_{\gamma ,x}\left( \frac{i}{n}\right)},
$$
which is the statement of the lemma.
\end{proof}

\begin{lemma}
\label{lem: lem1}
We define 
\begin{equation} 
\label{chisoi}
t_{\gamma ,x}:=\frac{x^{1/\gamma}}{1+x^{1/\gamma}}\quad \text{and}\quad i_{\gamma ,x}:= \lfloor n t_{\gamma ,x}\rfloor,
\end{equation}
We note that $t_{\gamma ,x}$ is the unique solution of the equation $J'_{\gamma ,x}(t)=0$, where
\begin{equation} \label{hamj}
J'_{\gamma ,x}(t)= \gamma \log \frac{1-t}{t}+\log x \; \; \mbox{and} \; \; J''_{\gamma ,x}(t)= \frac{-\gamma}{t(1-t)}.
\end{equation}
Assume that $1\geq x \geq (\log n)^{4\gamma}/n^\gamma$. 
\begin{itemize}
\item[a.] If $|i-i_{\gamma ,x}|\geq i_{\gamma ,x}^{3/4}$ then
$$\displaystyle \frac{\binom{n}{i}^\gamma x^i}{\binom{n}{i_{\gamma ,x}}^\gamma x^{i_{\gamma ,x}}} \leq \frac{1}{n^{10}}.$$

\item[b.] If $|i-i_{\gamma ,x}|< i_{\gamma ,x}^{3/4}$ then
$$\displaystyle \frac{\binom{n}{i}^\gamma x^i}{\binom{n}{i_{\gamma ,x}}^\gamma x^{i_{\gamma ,x}}} = \left(1+\kO\left(\frac{1}{\log n}\right)\right) \exp \left\{ \left[ J_{\gamma ,x}''\left(t_{\gamma ,x}\right) + \kO\left(\frac{(n t_{\gamma ,x})^{3/4}}{nt_{\gamma ,x}^2}\right) \right]\frac{(i-i_{\gamma ,x})^2}{2n} \right\}.$$
\end{itemize}
\end{lemma}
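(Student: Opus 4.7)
The plan is to start from Lemma~\ref{lem: lem0}, which upon dividing by $\binom{n}{i_{\gamma,x}}^\gamma x^{i_{\gamma,x}}$ yields
\begin{equation*}
\frac{\binom{n}{i}^\gamma x^i}{\binom{n}{i_{\gamma,x}}^\gamma x^{i_{\gamma,x}}} = \left(\frac{i_{\gamma,x}(n-i_{\gamma,x})}{i(n-i)}\right)^{\gamma/2}\!\left(1+\kO\bigl(\tfrac{1}{i}+\tfrac{1}{n-i}+\tfrac{1}{i_{\gamma,x}}\bigr)\right)\exp\!\bigl\{n\bigl[J_{\gamma,x}(\tfrac{i}{n})-J_{\gamma,x}(\tfrac{i_{\gamma,x}}{n})\bigr]\bigr\}.
\end{equation*}
The assumption $(\log n)^{4\gamma}n^{-\gamma}\leq x\leq 1$ forces $t_{\gamma,x}\leq\tfrac12$ and $nt_{\gamma,x}\geq\tfrac12(\log n)^4$, so $i_{\gamma,x}\geq\tfrac14(\log n)^4$ and $n-i_{\gamma,x}\asymp n$. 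Hence the Stirling prefactor and error are easy to control, and the entire problem reduces to analysing the exponent, using $J_{\gamma,x}'(t_{\gamma,x})=0$, $J_{\gamma,x}''(t_{\gamma,x})=-\gamma/[t_{\gamma,x}(1-t_{\gamma,x})]$ and the uniform bound $|J_{\gamma,x}'''(t)|=\kO(t^{-2})$ for $t$ comparable to $t_{\gamma,x}$.

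For part~(b), I would Taylor expand $J_{\gamma,x}$ to second order about $t_{\gamma,x}$ with Lagrange remainder, obtaining
\begin{equation*}
n\bigl[J_{\gamma,x}(\tfrac{i}{n})-J_{\gamma,x}(t_{\gamma,x})\bigr]=\frac{J_{\gamma,x}''(t_{\gamma,x})}{2n}(i-nt_{\gamma,x})^2+\kO\!\left(\frac{|i-nt_{\gamma,x}|^3}{n^2 t_{\gamma,x}^2}\right).
\end{equation*}
Substituting $nt_{\gamma,x}=i_{\gamma,x}+\kO(1)$ produces a cross term of size $\kO(|i-i_{\gamma,x}|/(nt_{\gamma,x}))=\kO(1/\log n)$, which is absorbed into the $(1+\kO(1/\log n))$ prefactor. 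On the window $|i-i_{\gamma,x}|\leq i_{\gamma,x}^{3/4}$, the cubic remainder factors as $\kO\bigl((nt_{\gamma,x})^{3/4}/(nt_{\gamma,x})^2\bigr)\cdot(i-i_{\gamma,x})^2/n$, which is exactly the error displayed in the statement. The Stirling prefactor $(i_{\gamma,x}(n-i_{\gamma,x})/[i(n-i)])^{\gamma/2}$ differs from $1$ by $\kO(|i-i_{\gamma,x}|/i_{\gamma,x})=\kO(i_{\gamma,x}^{-1/4})=\kO(1/\log n)$, again absorbed in the same prefactor.

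For part~(a), I would exploit log-concavity of $f(i):=\binom{n}{i}^\gamma x^i$, whose successive ratio $f(i+1)/f(i)=[(n-i)/(i+1)]^\gamma x$ is strictly decreasing in $i$. Hence $f$ is unimodal with mode within distance $1$ of $i_{\gamma,x}$, so $f(i)/f(i_{\gamma,x})$ is monotone as $i$ moves away from that mode. It therefore suffices to verify the $1/n^{10}$ bound at the two threshold points $i=i_{\gamma,x}\pm\lceil i_{\gamma,x}^{3/4}\rceil$, which lie on the boundary of the window covered by part~(b). Plugging $(i-i_{\gamma,x})^2\asymp i_{\gamma,x}^{3/2}=(nt_{\gamma,x})^{3/2}$ into the expansion from~(b) gives
\begin{equation*}
n\bigl[J_{\gamma,x}(t_{\gamma,x})-J_{\gamma,x}(\tfrac{i}{n})\bigr]\gtrsim \frac{\gamma}{t_{\gamma,x}}\cdot\frac{(nt_{\gamma,x})^{3/2}}{n}=\gamma\sqrt{nt_{\gamma,x}}\geq c(\log n)^2,
\end{equation*}
and since $\exp\{-c(\log n)^2\}$ dominates the polynomial Stirling prefactors, one obtains the claimed bound $1/n^{10}$.

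The main difficulty I anticipate is keeping the various error terms---the Stirling correction, the cubic Taylor remainder, and the shift between $nt_{\gamma,x}$ and $i_{\gamma,x}$---all inside the single $(1+\kO(1/\log n))$ envelope, and checking that the log-concavity reduction in~(a) is robust enough that one never has to apply Lemma~\ref{lem: lem0} at very small or very large indices, where the $\kO(1/i+1/(n-i))$ error term in that lemma fails to be small.
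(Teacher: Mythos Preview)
Your proposal is correct and follows essentially the same route as the paper: Stirling (Lemma~\ref{lem: lem0}) plus a Taylor expansion of $J_{\gamma,x}$, with concavity used in part~(a) to reduce to the threshold indices $i_{\gamma,x}\pm i_{\gamma,x}^{3/4}$. The only cosmetic differences are that the paper expands about $i_{\gamma,x}/n$ rather than $t_{\gamma,x}$ (so it carries a small first-order term, bounded exactly as in your \eqref{jpx}-type estimate), and in part~(a) it avoids applying Stirling at extreme $i$ by using the crude inequality $\binom{n}{i}\le e^{\,nI(i/n)}$ for the numerator instead of your discrete log-concavity reduction---both devices resolve the very concern you raised at the end.
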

\begin{proof}[Proof of Lemma \ref{lem: lem1}]

 a.   Since $\displaystyle \binom{n}{i} \leq \exp(nI(i/n))$, using (\ref{stirl}) we have

\begin{align*}
\displaystyle \frac{\binom{n}{i}^\gamma x^i}{\binom{n}{i_{\gamma ,x}}^\gamma x^{i_{\gamma ,x}}} & \leq \displaystyle (2\pi i_{\gamma ,x})^{\gamma/2} \exp \left( n \left[ J_{\gamma ,x}(i/n) -J_{\gamma ,x}(i_{\gamma ,x}/n) \right]\right)\\
& \leq \displaystyle (2\pi i_{\gamma ,x})^{\gamma/2} \exp \left( n \left[ J_{\gamma ,x}\left( \frac{i_{\gamma ,x} \pm i_{\gamma ,x}^{3/4}}{n}\right) -J_{\gamma ,x}(i_{\gamma ,x}/n) \right]\right),
\end{align*}
where the second lines follows from the fact that the function $J_{\gamma ,x}(t)$ is concave and attains maximum at $t_{\gamma ,x}$.
 By Taylor expansion, there exists $\theta \in \left(\frac{i_{\gamma ,x} - i_{\gamma ,x}^{3/4}}{n} , \frac{i_{\gamma ,x} + i_{\gamma ,x}^{3/4}}{n}  \right)$ such that
 $$J_{\gamma ,x}\left( \frac{i_{\gamma ,x} \pm i_{\gamma ,x}^{3/4}}{n}\right) -J_{\gamma ,x}\left( \frac{i_{\gamma ,x}}{n}\right)=\frac{ \pm i_{\gamma ,x}^{3/4}}{n} J'_{\gamma ,x}\left( \frac{i_{\gamma ,x}}{n}\right)+ J''_{\gamma ,x}(\theta)\frac{i_{\gamma ,x}^{3/2}}{2n^2}.$$
 Notice that
\begin{eqnarray}
\Big |J'_{\gamma ,x}\left(\frac{i_{\gamma ,x}}{n}\right) \Big |= \Big |J'_{\gamma ,x}\left(\frac{i_{\gamma ,x}}{n}\right)-J'_{\gamma ,x}\left(t_{\gamma ,x}\right) \Big | &\leq& \sup \limits_{y \in (\tfrac{i_{\gamma ,x}}{n}, t_{\gamma ,x})} |J''_{\gamma ,x}(y)| \Big |\frac{i_{\gamma ,x}}{n}-t_{\gamma ,x} \Big | \notag \\
&\leq& \frac{1}{n}\sup \limits_{y \in (\tfrac{i_{\gamma ,x}}{n}, t_{\gamma ,x})} \Big | \frac{-\gamma}{y(1-y)}\Big | \leq \frac{C}{nx^{1/\gamma}}. \label{jpx}
\end{eqnarray}
Combining this with the fact that
\begin{eqnarray*}
J_{\gamma ,x}''(\theta)=\frac{-\gamma}{\theta(1-\theta)} \leq \frac{-\gamma}{\theta} \leq \frac{-n\gamma}{i_{\gamma ,x}+i_{\gamma ,x}^{3/4}} \leq \frac{-cn}{i_{\gamma ,x}},
\end{eqnarray*}
we get that for $n$ large enough,
\begin{eqnarray*}
\displaystyle \frac{\binom{n}{i}^\gamma x^i}{\binom{n}{i_{\gamma ,x}}^\gamma x^{i_{\gamma ,x}}} &\leq   (2\pi i_{\gamma ,x})^{\gamma/2}  \exp \left( \frac{C i_{\gamma ,x}^{3/4}}{nx^{1/\gamma}} - ci_{\gamma ,x}^{1/2}\right)\\
& \displaystyle  \leq (2\pi n)^{\gamma/2} \exp \left(  - c' (\log n)^2\right) \leq \frac{1}{n^{10}},
\end{eqnarray*}
where in the last line we use the estimate $i_{\gamma ,x} \simeq n x^{1/\gamma} =(\log n)^4$. This ends the proof of Part a.

b. Suppose that $|i-i_{\gamma ,x}|< i_{\gamma ,x}^{3/4}$. By using \eqref{stirl} and Taylor expansion,
\begin{eqnarray*}
&& \frac{\binom{n}{i}^{\gamma} x^{i}}{\binom{n}{i_{\gamma ,x}}^{\gamma} x^{2_{\gamma ,x}}} =  \left[(1+\kO(i_{\gamma ,x}^{-1})) \frac{i_{\gamma ,x}(n-i_{\gamma ,x})}{i(n-i)} \right]^{\gamma/2} \exp \left( n \left[ J_x\left(\frac{i}{n}\right) -J_x\left(\frac{i_x}{n}\right) \right] \right)\\
&=& \left(1+\kO\left(\frac{1}{\log n}\right)\right)  \exp \left( n \left[ J_{\gamma ,x}'\left(\frac{i_{\gamma ,x}}{n}\right) \frac{(i-i_{\gamma ,x})}{n}+  J_{\gamma ,x}''\left(\frac{i_{\gamma ,x}}{n}\right) \frac{(i-i_{\gamma ,x})^2}{2n^2} + J_{\gamma ,x}'''\left(\theta\right) \frac{(i-i_{\gamma ,x})^3}{6n^3}  \right]  \right),
\end{eqnarray*}
for some $\theta \in (\tfrac{i_{\gamma ,x}}{n}, \tfrac{i}{n})$. 

 Observe that 
\begin{itemize}
\item $\displaystyle \Big |J'_{\gamma ,x}\left(\frac{i_{\gamma ,x}}{n}\right) \Big | \leq \frac{C'}{nx^{1/\gamma}}\leq \frac{C}{i_{\gamma ,x}} $ as in the proof of Part a.
\item $J_{\gamma ,x}'''(y) = \kO(y^{-2})$ for all $y\in \mathbb{R}$. Therefore $J_{\gamma ,x}'''(\theta) = \kO(t_{\gamma ,x}^{-2})$.
\item $\displaystyle J_{\gamma ,x}''\left(\frac{i_{\gamma ,x}}{n}\right)=J_x''\left(t_{\gamma ,x}\right) + \kO\left(\frac{1}{t_{\gamma ,x}^2}\right) \left(\frac{i_{\gamma ,x}}{n}-t_{\gamma ,x}\right) =J_x''\left(t_{\gamma ,x}\right) + \kO\left(\frac{1}{nt_{\gamma ,x}^2}\right).$
\end{itemize}

Combining these estimates,
\begin{eqnarray*}
 n \left[ J_x\left(\frac{i}{n}\right) -J_x\left(\frac{i_x}{n}\right) \right]
& =& \displaystyle \kO\left(\frac{i_{\gamma ,x}^{3/4}}{i_{\gamma ,x}}\right)+    \left[ J_{\gamma ,x}''\left(t_{\gamma ,x}\right) + \kO\left(\frac{1}{nt_{\gamma ,x}^2}\right) + \kO\left(\frac{1}{t_{\gamma ,x}^2}\right)\frac{i-i_{\gamma ,x}}{n} \right]\frac{(i-i_{\gamma ,x})^2}{2n} \\
& = &\displaystyle \kO\left(i_{\gamma ,x}^{-1/4}\right)+    \left[ J_{\gamma ,x}''\left(t_{\gamma ,x}\right) + \kO\left(\frac{(n t_{\gamma ,x})^{3/4}}{nt_{\gamma ,x}^2}\right) \right]\frac{(i-i_{\gamma ,x})^2}{2n}.
\end{eqnarray*}
Then Part b. follows.
\end{proof}
\begin{lemma}
\label{lem: lem2}
 We have
\begin{equation}
A_n(x) M_n(x) - B_n(x)^2=\frac{1}{2} \sum_{i,j=0}^n  (i-j)^2  \binom{n}{i}^{2\gamma} \binom{n}{j}^{2\gamma} x^{2(i+j-1)}.
\end{equation}
\end{lemma}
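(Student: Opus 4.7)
The plan is to prove Lemma \ref{lem: lem2} by a direct computation followed by a symmetrization trick. Writing $c_i := \binom{n}{i}^{2\gamma}$ for brevity, the explicit formulas for $M_n, A_n, B_n$ from \eqref{eq: MAB} give
\begin{align*}
A_n(x) M_n(x) &= \Bigl(\sum_{i=0}^n c_i\, i^2 x^{2(i-1)}\Bigr)\Bigl(\sum_{j=0}^n c_j x^{2j}\Bigr) = \sum_{i,j=0}^n c_i c_j\, i^2\, x^{2(i+j-1)},\\
B_n(x)^2 &= \Bigl(\sum_{i=0}^n c_i\, i\, x^{2i-1}\Bigr)^2 = \sum_{i,j=0}^n c_i c_j\, ij\, x^{2(i+j-1)}.
\end{align*}
Subtracting term by term,
\begin{equation*}
A_n(x)M_n(x) - B_n(x)^2 = \sum_{i,j=0}^n c_i c_j\,(i^2 - ij)\, x^{2(i+j-1)}.
\end{equation*}

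The key observation is that the summand on the right is not manifestly symmetric in $(i,j)$, but the whole sum is, since the index pair ranges over a symmetric set and $c_i c_j x^{2(i+j-1)}$ is invariant under swapping $i$ and $j$. Therefore I may replace the factor $i^2 - ij$ by its symmetrization
\[
\tfrac{1}{2}\bigl[(i^2 - ij) + (j^2 - ji)\bigr] = \tfrac{1}{2}(i^2 - 2ij + j^2) = \tfrac{1}{2}(i-j)^2
\]
without changing the value of the sum. This yields exactly
\[
A_n(x)M_n(x) - B_n(x)^2 = \tfrac{1}{2}\sum_{i,j=0}^n (i-j)^2\,\binom{n}{i}^{2\gamma}\binom{n}{j}^{2\gamma} x^{2(i+j-1)},
\]
which is the claimed identity.

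There is no real obstacle here: the lemma is a purely algebraic Cauchy--Schwarz-type identity (the classical ``variance expressed as $\tfrac12$ of squared differences'' trick), and its usefulness in the sequel is that the right-hand side is a manifestly nonnegative weighted sum of $(i-j)^2$, which is precisely the form needed when comparing $A_n M_n - B_n^2$ against $M_n^2$ in the Kac--Rice integrand. Writing it in this symmetric form will make the subsequent asymptotic estimates in Proposition \ref{prop: M} (i.e.\ localizing the dominant contribution near $i = j = i_{\gamma,x}$ with Gaussian deviations of size $\sqrt{n t_{\gamma,x}(1-t_{\gamma,x})}$) transparent, since the factor $(i-j)^2$ exactly captures the variance of the relevant concentrated distribution.
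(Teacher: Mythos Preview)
Your proof is correct and follows essentially the same approach as the paper: expand $A_nM_n$ and $B_n^2$ as double sums using \eqref{eq: MAB}, then symmetrize the coefficient $i^2-ij$ in $(i,j)$ to obtain $\tfrac{1}{2}(i-j)^2$. The additional commentary about the role of the identity in Proposition~\ref{prop: M} is accurate but not part of the proof itself.
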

\begin{proof}[Proof of Lemma \ref{lem: lem2}]
Using formulas of $M_n, A_n$ and $B_n$ given in \eqref{eq: MAB}, we obtain
\begin{align*}
A_n(x) M_n(x) - B_n(x)^2 &=\sum_{i,j=0}^n  i^2  \binom{n}{i}^{2\gamma} x^{2(i-1)} \binom{n}{j}^{2\gamma} x^{2 j} - \sum_{i,j=0}^n i  \binom{n}{i}^2 x^{2i-1} j  \binom{n}{j}^2 x^{2j-1} \notag \\& = \frac{1}{2} \sum_{i,j=0}^n  [i^2 + j^2 -2 ij]  \binom{n}{i}^{2\gamma} \binom{n}{j}^{2\gamma} x^{2(i+j-1)} \notag \\
&= \frac{1}{2} \sum_{i,j=0}^n  (i-j)^2  \binom{n}{i}^{2\gamma} \binom{n}{j}^{2\gamma} x^{2(i+j-1)}.
\end{align*}
The last step is the desired equality.
\end{proof}
The following lemma will be used to obtain asymptotic behaviour of $A_nM_n-B_n^2$ later on.
\begin{lemma}
\label{lem: lem3}
 Let $f(x,y)$ be a bivariate function such that $f(x,y)=\kO(x^2+y^2)$. Consider two sequences $(a_n)$ and $(b_n)$ such that $a_n \rightarrow 0$ and $\frac{b_n(\log n)^4}{a_n} \rightarrow 0$. Then for $k< i_{\gamma,x}-a_n^{-1}$ and $l> i_{\gamma,x}+a_n^{-1}$,
\begin{align*}
& \displaystyle  \sum_{i,j=k}^l f(i,j) \exp \left( -(a_n+b_n) \left( (i-i_{\gamma,x})^2+(j-i_{\gamma,x})^2\right)\right)\\
= & \displaystyle \left(1+\kO\left(\frac{1}{\log n}\right)\right) \sum_{{\substack{|i-i_{\gamma,x}|  < (a_nb_n)^{-1/4}\\ |j-i_{\gamma,x}| < (a_nb_n)^{-1/4}}}} f(i,j) \exp \left( -a_n \left( (i-i_{\gamma,x})^2+(j-i_{\gamma,x})^2\right)\right) +\kO(1).
\end{align*}
\end{lemma}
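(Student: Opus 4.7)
The plan is to partition the double sum over $[k,l]^2$ into a bulk $B$ and its complement $T$, where $B$ is the square of half-width $R:=(a_nb_n)^{-1/4}$ centred at $(i_{\gamma,x},i_{\gamma,x})$; the hypothesis $k<i_{\gamma,x}-a_n^{-1}$, $l>i_{\gamma,x}+a_n^{-1}$ guarantees that the effective bulk (where the integrand is non-negligible) lies inside $[k,l]^2$. The bulk will yield the main term with multiplicative error $1+\kO(1/\log n)$, while the tail, by virtue of fast Gaussian decay, is absorbed into the additive $\kO(1)$ error. Writing $d^2:=(i-i_{\gamma,x})^2+(j-i_{\gamma,x})^2$, set
\[ B=\{(i,j)\in[k,l]^2:\ |i-i_{\gamma,x}|<R \text{ and } |j-i_{\gamma,x}|<R\},\qquad T=[k,l]^2\setminus B. \]

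For the bulk, on $B$ one has $d^2\leq 2R^2=2(a_nb_n)^{-1/2}$, so $b_nd^2\leq 2\sqrt{b_n/a_n}$. The hypothesis $b_n(\log n)^4/a_n\to 0$ gives $\sqrt{b_n/a_n}=o((\log n)^{-2})=o(1/\log n)$, whence $e^{-b_nd^2}=1+\kO(1/\log n)$ uniformly on $B$; multiplying by $e^{-a_nd^2}$ and summing yields
\[ \sum_{(i,j)\in B} f(i,j)\, e^{-(a_n+b_n)d^2}=\bigl(1+\kO(1/\log n)\bigr)\sum_{(i,j)\in B} f(i,j)\, e^{-a_nd^2}, \]
which is the claimed main term.

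For the tail, the hypothesis $|f(x,y)|\leq C(x^2+y^2)$ combined with $i_{\gamma,x}\leq n/2$ gives $|f(i,j)|\leq C'(n^2+d^2)$. One-dimensional Gaussian tail estimates such as $\sum_{|u|\geq R}e^{-a_nu^2}\leq c\, e^{-a_nR^2}/(a_nR)$ and $\sum_u e^{-a_nu^2}\leq c/\sqrt{a_n}$ (together with their weighted analogues carrying an extra $u^2$) then show that $\sum_T(n^2+d^2)e^{-a_nd^2}$ is bounded by a polynomial in $n$, $a_n^{-1}$ and $R$ times $e^{-a_nR^2}$. Since $a_nR^2=(a_n/b_n)^{1/2}$ and the hypothesis forces $a_n/b_n\gg(\log n)^4$, we have $a_nR^2\gg(\log n)^2$, so $e^{-a_nR^2}$ decays faster than any inverse polynomial in $n$ and swamps the polynomial pre-factors; hence the tail contributes $o(1)=\kO(1)$. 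The main obstacle is exactly this tail estimate: one must ensure that the super-polynomially small factor $\exp(-(a_n/b_n)^{1/2})$ dominates both the polynomial pre-factor $n^2$ (arising from $i_{\gamma,x}^2$) and the possibly divergent powers of $a_n^{-1}$ from the Gaussian sums, and the calibration $b_n(\log n)^4/a_n\to 0$ is tailored precisely to make this happen.
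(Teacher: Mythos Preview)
Your argument is correct and mirrors the paper's: the same bulk/tail split at radius $R=(a_nb_n)^{-1/4}$, the same bulk estimate $b_nd^2=O((b_n/a_n)^{1/2})=o((\log n)^{-2})$ giving $e^{-b_nd^2}=1+O(1/\log n)$, and the same tail mechanism $a_nR^2=(a_n/b_n)^{1/2}\gg(\log n)^2$. The paper handles the tail a touch more crudely---bounding it by $Cn^4e^{-c(\log n)^2}$ directly (at most $O(n^2)$ terms, each of size $O(n^2)$)---which sidesteps the $a_n^{-1}$ pre-factors your Gaussian-sum route introduces.
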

\begin{proof}[Proof of Lemma \ref{lem: lem3}]
Denote by $\theta = (a_nb_n)^{-1/4}$. If $|i-i_{\gamma,x}|\geq \theta$ or $|j-i_{\gamma,x}|\geq \theta$ then
$$ -(a_n+b_n) \left( (i-i_{\gamma,x})^2+(j-i_{\gamma,x})^2\right) \leq -a_n \theta^2 \leq -c(\log n)^2.$$
Therefore
$$\sum_{{\substack{|i-i_{\gamma,x}|  < (a_nb_n)^{-1/4} \; \textrm{or}\\ |j-i_{\gamma,x}| < (a_nb_n)^{-1/4}}}} f(i,j) \exp \left( -(a_n+b_n) \left( (i-i_{\gamma,x})^2+(j-i_{\gamma,x})^2\right)\right) \leq Cn^4 e^{-c(\log n)^2} =\kO(1).$$
Consider $|i-i_{\gamma,x}|< \theta$ and $|j-i_{\gamma,x}|< \theta$. Then 
$$b_n \left( (i-i_{\gamma,x})^2+(j-i_{\gamma,x})^2\right) = \kO (b_n \theta^2) =\kO \left( \frac{1}{(\log n)^2}\right).$$
It implies that 
$$\exp \left( -(a_n+b_n) \left( (i-i_{\gamma,x})^2+(j-i_{\gamma,x})^2\right)\right)= \left(1+\kO\left(\frac{1}{\log n}\right)\right) \exp \left( -a_n \left( (i-i_{\gamma,x})^2+(j-i_{\gamma,x})^2\right)\right).  $$
Then the result follows.
\end{proof}
\begin{lemma}
\label{lem: lem4}
 Let $g:\; \R \rightarrow \R$ be a differentiable function such that
$$\int_{\R}(|g(x)+|g'(x)|)(x^2+|x|+1)dx < \infty.$$
Then for any $K,l,m$ such that $K,\frac{l}{\sqrt{K}},\frac{m}{\sqrt{K}} \rightarrow \infty$, we have
\begin{equation}
\label{firstinter}
\sum_{i=-l}^m g\left( \frac{i}{\sqrt{K}}\right)= (1+o(1)) \sqrt{K} \int_{\R} g(x)dx,
\end{equation}
and
\begin{equation}
\label{secondinter}
\sum_{i,j=-l}^m (i-j)^2 g\left( \frac{i}{\sqrt{K}}\right)g\left( \frac{j}{\sqrt{K}}\right)= (1+o(1)) (\sqrt{K})^4 \int_{\R^2} (x-y)^2 g(x)g(y)dxdy.
\end{equation}
\end{lemma}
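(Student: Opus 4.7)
The plan is to treat both identities as weighted Riemann-sum approximations. The weighted integrability $\int_{\R}(|g|+|g'|)(x^2+|x|+1)\,dx<\infty$ will play two distinct roles: controlling the quadrature error (via $g'$) and controlling the truncation from the finite window $\{-l,\dots,m\}$ (via the polynomial weight on $g$).

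For the first identity, I would start from the elementary comparison, obtained by the fundamental theorem of calculus,
$$\Big|\,g(i/\sqrt K)\tfrac{1}{\sqrt K} - \int_{i/\sqrt K}^{(i+1)/\sqrt K} g(x)\,dx\,\Big| \le \tfrac{1}{\sqrt K}\int_{i/\sqrt K}^{(i+1)/\sqrt K} |g'(t)|\,dt.$$
Summing over $i=-l,\dots,m$ telescopes the right-hand side to at most $\tfrac{1}{\sqrt K}\int_\R |g'|$, yielding
$$\sum_{i=-l}^m g(i/\sqrt K) = \sqrt K \int_{-l/\sqrt K}^{(m+1)/\sqrt K} g(x)\,dx + O(1).$$
Then I would estimate the truncation by $\bigl|\int_\R g - \int_{-l/\sqrt K}^{(m+1)/\sqrt K} g\bigr|\le (l/\sqrt K)^{-1}\int_{|x|>l/\sqrt K}|x|\,|g(x)|\,dx$, which is $o(1)$ since $l/\sqrt K\to\infty$ and $|x|\,|g|$ is integrable. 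Multiplying through by $\sqrt K$ and dividing by the (assumed nonzero) $\int_\R g$ gives the claimed $(1+o(1))\sqrt K\int_\R g$.

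For the second identity, I would apply the analogous idea to the bivariate function $h(u,v):=(u-v)^2 g(u)g(v)$. At grid points the summand equals $K\,h(i/\sqrt K,j/\sqrt K)$ since $(i-j)^2=K(i/\sqrt K-j/\sqrt K)^2$, so the sum equals $K\sum_{i,j}h(i/\sqrt K,j/\sqrt K)$, and a two-dimensional Riemann approximation will produce $K\cdot K\int\!\!\int h = (\sqrt K)^4 \int\!\!\int(x-y)^2 g(x)g(y)\,dx\,dy$, which is the advertised leading order. To justify the approximation, I would write the cellwise error by separating variables,
$$h(i/\sqrt K,j/\sqrt K)-h(u,v)=\bigl[h(i/\sqrt K,j/\sqrt K)-h(u,j/\sqrt K)\bigr]+\bigl[h(u,j/\sqrt K)-h(u,v)\bigr],$$
and bound each piece by the fundamental theorem of calculus in one variable at a time. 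The relevant partial derivatives satisfy $|\partial_u h(u,v)|\lesssim (|u-v|+(u-v)^2)|g(u)||g(v)|+(u-v)^2|g'(u)||g(v)|$, so after summation and Fubini the total error factorises into a product of one-dimensional integrals controlled by the hypothesis.

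The main obstacle is the 2D error bookkeeping: one needs the polynomial weight $(x^2+|x|+1)$ precisely because expanding $(u-v)^2=u^2-2uv+v^2$ produces cross-products whose integrability is not automatic from $g\in L^1$ alone. The weight on $|g|$ controls the leading Riemann sum (via $\int x^2|g|\cdot\int|g|$ and $(\int|x|\,|g|)^2$), the weight on $|g'|$ controls the quadrature error, and the tail integrals $\int_{|u|>l/\sqrt K\,\text{or}\,|v|>l/\sqrt K}(u-v)^2|g(u)g(v)|\,du\,dv$ vanish via $(u-v)^2\le 2(u^2+v^2)$ combined with $l/\sqrt K,m/\sqrt K\to\infty$. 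No step is individually deep, but the bookkeeping must carefully match the three weights in the hypothesis to the three error sources (truncation, linear cross term, quadrature).
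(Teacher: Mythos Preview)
Your argument is correct and follows a genuinely different route from the paper's. The paper proceeds by an $\varepsilon$-truncation: given $\varepsilon>0$ it fixes a compact window $[-N_\varepsilon,N_\varepsilon]$, invokes ordinary Riemann-sum convergence there (using only continuity of $g$), and controls the tail sum $\tfrac{1}{\sqrt K}\sum_{|i|>\sqrt K N_\varepsilon}|g(i/\sqrt K)|$ via a pointwise polynomial-decay bound on $g$; the bivariate identity is then declared to follow ``by the same argument.'' You instead bound the quadrature error globally via the fundamental theorem of calculus, which puts the integrability of $|g'|$ and of $(1+x^2)|g'|$ to direct use---something the paper's proof never explicitly invokes. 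Your route yields an explicit $O(1)$ error for \eqref{firstinter} and, after the bookkeeping you outline, an error of order $\sqrt K$ against a main term of order $K^2$ for \eqref{secondinter}; the paper's route is shorter because at the $\varepsilon$-level the two-dimensional case really is the same argument, whereas yours must carry out the cellwise split and then control $\sum_j \int_{\R}|\partial_u h(t,j/\sqrt K)|\,dt$ by a secondary one-dimensional estimate. That secondary step does go through: one checks $\int_{\R}|\partial_u h(t,v)|\,dt \lesssim |g(v)|(1+v^2)$, and the one-dimensional result applied to $|g|(1+x^2)$ (whose derivative is again controlled by the hypothesis) bounds the resulting sum by $O(\sqrt K)$.
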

\begin{proof}[Proof of Lemma \ref{lem: lem4}]
We can rewrite (\ref{firstinter}) as
$$\underset{k\rightarrow \infty}{\lim} \frac{1}{\sqrt{K}}\sum_{i=-l}^m g\left( \frac{i}{\sqrt{K}}\right)=  \int_{\R} g(x)dx < \infty.$$
Indeed, for any $\epsilon >0$ there exists $N_{\epsilon} > \epsilon^{-1}$ such that
$$\int_{\R\setminus [-N_{\epsilon},N_{\epsilon}]} |g(x)|dx\leq \epsilon. $$
It is clear that
$$\underset{k\rightarrow \infty}{\lim} \frac{1}{\sqrt{K}}\sum_{-\sqrt{K}N_{\epsilon} \leq i \leq \sqrt{K}N_{\epsilon} } g\left( \frac{i}{\sqrt{K}}\right)=  \int_{-N_{\epsilon}}^{N_{\epsilon}} g(x)dx < \infty,$$
then there exists $K_{\epsilon}>0$ such that for any $K>K_{\epsilon}$,
$$\left|  \frac{1}{\sqrt{K}}\sum_{-\sqrt{K}N_{\epsilon} \leq i \leq \sqrt{K}N_{\epsilon} } g\left( \frac{i}{\sqrt{K}}\right)-  \int_{-N_{\epsilon}}^{N_{\epsilon}} g(x)dx\right| \leq \epsilon.$$
For the remainder term, using the fact that there exists $M>0$ such that $|g(x)|x^4 \leq M,\; \forall x\in \R$, 
\begin{align*}
&\displaystyle\left| \frac{1}{\sqrt{K}}\sum_{i\in [-l,m]\setminus [-\sqrt{K}N_{\epsilon},\sqrt{K}N_{\epsilon}] } g\left( \frac{i}{\sqrt{K}}\right)  \right|  \leq  \frac{2}{\sqrt{K}}\sum_{i>\sqrt{K}N_{\epsilon}} \frac{M}{(i/\sqrt{K})^4} \\
\leq &\displaystyle   2MK^{3/2} \int_{\sqrt{K}N_{\epsilon}}^{\infty} x^{-4}dx =\frac{2MK^{3/2}}{3 (\sqrt{K}N_{\epsilon})^3} = \frac{2M}{3N_{\epsilon}^3}\leq \epsilon.
\end{align*}
In conclusion, for any $\epsilon >0$ there exists $K_{\epsilon}>0$ such that for any $K>K_{\epsilon}$,
$$\left|  \frac{1}{\sqrt{K}}\sum_{i=-l}^mg\left( \frac{i}{\sqrt{K}}\right)-  \int_{\R} g(x)dx\right| \leq 3\epsilon.$$
It implies (\ref{firstinter}). By the same argument, we can prove (\ref{secondinter}).
\end{proof}
Finally, we now bring all previous technical lemmas to prove Proposition \ref{prop: M}.
\begin{proof}[Proof of Proposition \ref{prop: M}]
We have
$$M_n(x)= \sum_{|i-i_{\gamma,x}| \geq i_{\gamma,x}^{3/4}} \binom{n}{i}^{2\gamma}x^{2i}+\sum_{|i-i_{\gamma,x}| < i_{\gamma,x}^{3/4}}\binom{n}{i}^{2\gamma}x^{2i}=:M_{1,n}+M_{2,n}.$$
By Lemma \ref{lem: lem1} Part a.,
$$\frac{M_{1,n}}{\binom{n}{i_{\gamma,x}}^{2\gamma}x^{2i_{\gamma,x}}} \leq \frac{n}{n^{20}} \leq \frac{1}{n^{10}};$$
and by Lemma \ref{lem: lem1} Part b., Lemma \ref{lem: lem3} and Lemma \ref{lem: lem4},
\bea{
\frac{M_{2,n}}{\binom{n}{i_{\gamma,x}}^{2\gamma}x^{2i_{\gamma,x}}} &=& (1+o(1)) \sum_{|i-i_{\gamma, x}| < i_{\gamma, x}^{3/4}  } \exp \left( \left[ J_{\gamma ,x}''\left(t_{\gamma ,x}\right) + \kO\left(\frac{(n t_{\gamma ,x})^{3/4}}{nt_{\gamma ,x}^2}\right) \right]\frac{(i-i_{\gamma ,x})^2}{n} \right) \\
&=& (1+o(1)) \int_{\R}e^{-x^2}dx \times \sqrt{n/|J"(t_{\gamma,x})|} \\
&=& (\sqrt{ \pi} +o(1)) \sqrt{\frac{n x^{1/\gamma}}{\gamma (1+x^{1/\gamma})^2 }}.
}
Now according to Lemma \ref{lem: lem2}, we have
$$ A_n(x)M_n(x) -B^2_n(x)=\frac{1}{2}\sum_{i,j=0}^n (i-j)^2 \binom{n}{i}^{2\gamma}x^{2i}\binom{n}{j}^{2\gamma}x^{2i+2j-2}.$$
Then by the same argument as above, we get
\bea{
 A_n(x)M_n(x) -B^2_n(x)&=&\frac{1}{2}(1+o(1))\int_{\R^2}(x-y)^2 e^{-(x^2+y^2)}dxdy \times (n/|J"(t_{\gamma,x})|)^2 \\
 &=&  \left(\frac{ \pi}{2} +o(1)\right) \left(\frac{n x^{1/\gamma}}{\gamma (1+x^{1/\gamma})^2 }\right)^2,
}
which completes the proof.
\end{proof}

\bibliographystyle{plain}
\bibliography{GTbib}

\end{document}